\documentclass[12pt]{amsart}
\usepackage{fullpage}
\usepackage{graphicx}
\usepackage{amsfonts}
\usepackage{amstext}
\usepackage{amssymb}
\usepackage{amsmath}
\usepackage{amsthm}
\usepackage{amsopn}
\usepackage{bm}
\usepackage[ps,dvips,matrix,curve,frame,arrow,rotate,line]{xy}

\headsep .05in

\newtheorem{theorem}{Theorem}
\newtheorem{prop}[theorem]{Proposition}
\newtheorem{lemma}[theorem]{Lemma}
\newtheorem{corollary}[theorem]{Corollary}

\theoremstyle{remark}

\theoremstyle{definition}
\newtheorem{remark}[theorem]{Remark}
\newtheorem{example}[theorem]{Example}
\newtheorem{defn}[theorem]{Definition}

\newcommand{\divides}{\mid}

\newcommand{\C}{\mathcal{C}}

\newcommand{\OO}{\mathcal{O}}
\newcommand{\VV}{\mathcal{V}}
\newcommand{\QQ}{\mathbb{Q}}
\newcommand{\RR}{\mathbb{R}}
\newcommand{\ZZ}{\mathbb{Z}}
\newcommand{\NN}{\mathbb{N}}

\newcommand{\FF}{\mathbb{F}}
\newcommand{\isom}{\cong}

\newcommand{\pp}{\mathfrak{p}}

\newcommand{\ol}[1]{\overline{#1}}
\newcommand{\oh}[1]{\widehat{#1}}
\newcommand{\sq}[1]{\widetilde{#1}}

\DeclareMathOperator{\res}{res}

\DeclareMathOperator{\Char}{char}

\newcommand{\Card}[1]{\##1}

\newcommand{\eqdef}{\overset{\text{def}}{=}}
\newcommand{\ilim}{\mathop{\varprojlim}\limits}

\swapnumbers
\numberwithin{theorem}{section}

\newcommand{\maM}{locally scaling}
\newcommand{\cip}{corresponding interpolating polynomials}
\newcommand{\TBRBO}{T.B.R.B.O.}
\renewcommand{\v}{w}

\begin{document}
\title{On measure-preserving $\C^1$ transformations of compact-open subsets of non-archimedean local fields}
\author{James Kingsbery}
\address[James Kingsbery]{Department of Mathematics\\
     Williams College \\ Williamstown, MA 01267, USA}
\email{James.C.Kingsbery@williams.edu}
\author{Alex Levin}
\address[Alex Levin]{Harvard University, MA 02138, USA}
\email{levin@fas.harvard.edu}
\author{Anatoly Preygel}
\address[Anatoly Preygel]{Harvard University, MA 02138, USA}
\email{preygel@post.harvard.edu}
\author{Cesar E. Silva}
\address[Cesar Silva]{Department of Mathematics\\
     Williams College \\ Williamstown, MA 01267, USA}
\email{csilva@williams.edu}
\subjclass{Primary 37A05; Secondary 37F10}
\keywords{Measure-preserving, ergodic, non-archimedean local field}
\begin{abstract}
We introduce the notion of a \emph{\maM} transformation defined on a compact-open subset of a non-archimedean local field.  We show that this class encompasses the Haar measure-preserving transformations defined by $\C^1$ (in particular, polynomial)  maps, and prove a structure theorem for {\maM} transformations.  We use the theory of polynomial approximation on compact-open subsets of non-archimedean local fields to demonstrate the existence of ergodic Markov, and mixing Markov transformations defined by such polynomial maps.  We also give simple sufficient conditions on the Mahler expansion of a continuous map $\ZZ_p \to \ZZ_p$ for it to define a Bernoulli transformation.
\end{abstract}
\maketitle

\section{Introduction}\label{sec:intro}
The $p$-adic numbers have arisen  in a natural way in the study of some 
dynamical systems, for example in the study of group automorphisms of 
solenoids in Lind and Schmidt \cite{LindSchmidt}; other situations in dynamics where the 
$p$-adic numbers come up are surveyed in Ward \cite{Ward}.  At the same time 
there has been interest in studying the dynamics (topological, 
complex,  or measurable) of naturally arising maps (such as 
polynomials) defined on  the $p$-adics; see for
example Benedetto \cite{Benedetto},  Khrennikov and Nilson \cite{KhrennikovNilson}, and Rivera-Letelier \cite{RiveraL}.  In particular, 
Bryk and Silva in \cite{BrykSilva} studied the measurable dynamics of 
simple polynomials on balls and spheres
on the field $\QQ_p$ of  $p$-adic numbers.  The maps they studied are 
ergodic but not totally ergodic and they asked whether there exist 
polynomials on $\QQ_p$ that define (Haar) measure-preserving 
transformations that are mixing.   Woodcock and Smart in \cite{WoodcockSmart} show that the polynomial map $x \mapsto \tfrac{x^p-x}{p}$ defines a Bernoulli, hence mixing,  transformation on $\ZZ_p$.  A  consequence of our work is a significant extension of the result for this map, placing it in a greater context (see in particular Example~\ref{ex:x^p-x}).

Rather than working on $\QQ_p$ we find that the natural setting for our work is over a non-archimedean local field $K$.  We introduce a class of transformations, called {\maM}, and show in Lemmas~\ref{lem:C1Deriv} that measure-preserving $\C^1$ (in particular, polynomial)  maps are {\maM}.  In Section~\ref{sec:structMaM} we apply the theory of Markov shifts to classify the dynamics of {\maM} transformations, decomposing the transformation into a disjoint union of ergodic Markov transformations and local isometries.  In particular, we show that a weakly mixing {\maM} transformation must be mixing.  We also show the existence of polynomials defining transformations exhibiting nearly the full range of behaviors possible for {\maM} transformations, such as ergodic Markov, mixing Markov, and Bernoulli transformations.  

Given a polynomial defined on a compact-open subset of $K$, our work shows that a finite computation may check whether it defines a measure-preserving transformation and whether it defines a mixing transformation; the question of ergodicity is also answered, except in the case where the polynomial is $1$-Lipschitz, which has been studied by Anashin in \cite{Anashin}.

We briefly mention related works studying measurable dynamics of certain
maps on spaces related to the $p$-adics.  These  works \cite{RumelyBaker},
\cite{FavreRL04}, and \cite{FavreRL} construct a natural
invariant measure for a wide-class of rational functions, as in existing
constructions in complex dynamics.  The natural domain for these
constructions is the so-called Berkovich projective space, a space much
larger than the ordinary $p$-adics.

We now indicate an outline of the rest of the paper.  Section~\ref{sec:markov} reviews results on Markov shifts, Section~\ref{sec:analyticNotation} reviews preliminaries on non-archimedean local fields as well as some analytic definitions, and Section~\ref{sec:polApprox} recalls some of the theory of polynomial approximation on rings of integers of non-archimedean local fields.  

Section~\ref{sec:mpC1} establishes the fact that measure-preserving $\C^1$ maps are {\maM}, and Section~\ref{sec:structMaM} proves our main structural results, in particular Proposition~\ref{prop:maMPhi} and Theorem~\ref{thm:maMMPStruct}.  Section~\ref{sec:polMaM}, in particular Theorem~\ref{thm:polyRepMaM}, shows that polynomial maps are in a sense a representative class of {\maM} transformations, and demonstrates the existence of polynomial maps defining {\maM} transformation with various behaviors, including mixing.   Section~\ref{sec:polBern} and Section~\ref{sec:polAlmostBern} are devoted to demonstrating two interesting classes of {\maM} maps on $\ZZ_p$ that arise naturally in the study of polynomial approximations. Specifically, Section~\ref{sec:polBern} studies maps which are isometrically conjugate to the natural realization of the (one-sided) Bernoulli shift, and shows for instance that the map $x \mapsto {x \choose p^\ell}$ on $\ZZ_p$ is Bernoulli.  Section~\ref{sec:polAlmostBern} then studies similar binomial-coefficient maps which are {\maM} and so have very regular structures but fail to be Haar measure-preserving. 

\subsection{Acknowledgements}
This paper is based on research by the Ergodic Theory group of the 2005 SMALL  summer research project at Williams College.  Support for the project was provided by National Science Foundation REU Grant DMS - 0353634 and the Bronfman Science Center of Williams College.  The authors would like to thank several anonymous referees for careful readings of the paper and  valuable suggestions. 

\section{Markov shifts}\label{sec:markov}
Let $H$ be a finite non-empty set.  By a \emph{stochastic matrix on $H$} we mean a map $A: H^2 \to \RR_{\geq 0}$ such that \[ \sum_{j \in H} A(i,j) = 1 \qquad \text{for each $i \in H$}. \]  Putting $H$ into a bijection with the set $\{0,\ldots,\Card{H}-1\}$ we may regard $A$ as a $\Card{H} \times \Card{H}$ matrix with non-negative entries and the entries in each row summing to $1$.  In analogy with this case, we will refer to the sets $\{ A(i,\cdot) \}$ and $\{A(\cdot,j)\}$ as \emph{rows} and \emph{columns of $A$}, respectively.

By a \emph{row vector on $H$} we mean a map $\v: H \to \RR$.  For $\v$ a row vector and $A$ a stochastic matrix, we define their product as the row vector $\v A$ defined by
\[ \v A(j) = \sum_{i \in H} \v(i) A(i,j). \]  We will say that $\v$ is non-negative (resp. positive) if it takes values in $\RR_{\geq 0}$ (resp $\RR_{>0}$).

To any stochastic matrix $A$ we may associate the following symbolic dynamical system:
\begin{enumerate}
\item Let \[ X_A = \{ x \in \prod_{i \geq 0} H: A(\pi_{n}(x),\pi_{n+1}(x)) \neq 0 \text{ for all } n \geq 0 \} \] where $\pi_{n}: \prod_{i \geq 0} H \to H$ is projection to the $n^\text{th}$ coordinate. Give each finite factor the discrete topology, and $X_A$ the subspace topology inherited from the product topolog
\item Let $T_A: X_A \to X_A$ be defined by $\pi_n \circ T_A = \pi_{n+1}$; that is, $T_A$ is simply ``shifting left.''  Then, $(X_A,T_A)$ is a topological dynamical system.
\item If in addition we are given a non-negative row vector $\v$, then we may define a measure on $X_A$ by
\[ \mu_{A,\v}([d_0 d_1 d_2 \ldots d_\ell]) = \v(d_0) A(d_0,d_1) \cdots A(d_{\ell-1} d_\ell) \text{, where }[d_0 \ldots d_\ell] \eqdef \bigcap_{n=0}^{\ell} \pi_{n}^{-1}(d_n). \]
We call a set of the form $[d_0 \ldots d_\ell]$ a \emph{cylinder set}; we may observe that the cylinder sets form a base for the topology on $X_A$.  Note that if $\v$ is in fact positive, then $\mu_{A,\v}$ assigns positive measure to each cylinder set and hence to each open set.  We may check that if $\v = \v A$ then $T_A$ is measure-preserving with respect to $\mu_{A,\v}$. 
\end{enumerate}

We call such a dynamical system a \emph{Markov shift}.  We say that a dynamical system is \emph{Markov} if it is isomorphic to some Markov shift.

We say that a stochastic matrix $A$ is \emph{irreducible} or \emph{ergodic} if for each $i,j \in H$ there exists a $n \in \NN$ such that $A^n(i,j) > 0$.  This condition has a natural interpretation in terms of the connectedness of a certain directed graph associated with $A$, as we shall see in the proof of Proposition~\ref{prop:ergDecMarkov}.  We say that a stochastic matrix $A$ is \emph{primitive} if there exists a $n \in \NN$ such that $A^n(i,j) > 0$ for all $i,j \in H$. 

Using the Perron-Frobenius Theorem on non-negative irreducible and primitive matrices, along with a graph theoretic interpretation of the stochastic matrix, one may  obtain an ergodic decomposition result for Markov shifts:
\begin{prop}\label{prop:ergDecMarkov}
Let $A$ be a stochastic matrix, and $\v$ a positive row vector such that $\v = \v A$.

Then, we may partition $H$ into disjoint sets \[ H = \bigsqcup_{k=1}^{n} H_k \] such that
\begin{enumerate}
\item $A(i,j)=0$ for $i \in H_k, j \in H_\ell$ with $k \neq \ell$; and
\item $A_k = \left.A\right|_{H_k \times H_k}$ is irreducible for $k=1,\ldots,n$
\end{enumerate}
Then, $\v_k = \left. \v \right|_{H_k}$ satisfies $\v_k = \v_k A_k$. And we have the ergodic decomposition of $(X_A,\mu_{A,\v},T_A)$ as
\[ (X_A,\mu_{A,\v},T_A) = \bigsqcup_{k=1}^{n} (X_{A_k},\mu_{A_k,\v_k},T_{A_k}), \] where $X_{A_k}$ is viewed as a $T_A$-invariant subset of $X_A$, so that $\mu_{A_k,\v_k} = \left. \mu_{A,\v} \right|_{A_k}$ and $T_{A_k} = \left. T_{A} \right|_{A_k}$.

Moreover, the $k^\text{th}$ summand is mixing if $A_k$ is primitive.
\end{prop}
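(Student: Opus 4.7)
The plan is to define a directed graph $G$ on vertex set $H$ with an edge $i \to j$ precisely when $A(i,j) > 0$, and to take $\{H_k\}_{k=1}^n$ to be the strongly connected components (SCCs) of $G$; that is, $i$ and $j$ lie in the same $H_k$ iff there are directed paths from $i$ to $j$ and from $j$ to $i$ in $G$. Property (ii) is then immediate: for any $i, j \in H_k$ some power of $A_k$ has positive $(i,j)$-entry. The content of the proposition is (i), which says the condensation of $G$ (its quotient DAG) has no edges, i.e.\ every SCC is both a source and a sink.

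To establish (i), I would argue by contradiction. If some edge of $G$ crosses SCCs, then since the condensation is a finite acyclic digraph it has a source vertex, yielding a component $H_{k_0}$ that receives no edges from outside but emits at least one outgoing edge. Summing the stationarity identity $\v(j) = \sum_{i \in H} \v(i) A(i,j)$ over $j \in H_{k_0}$ and discarding the (vanishing) incoming cross-terms gives
\[
\sum_{j \in H_{k_0}} \v(j) \;=\; \sum_{i \in H_{k_0}} \v(i) \sum_{j \in H_{k_0}} A(i,j).
\]
Each inner sum is $\leq 1$ since $A$ is stochastic, with strict inequality for the $i \in H_{k_0}$ emitting the outgoing edge. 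Positivity of $\v$ then forces the right-hand side to be strictly less than $\sum_{i \in H_{k_0}} \v(i)$, which equals the left-hand side by a change of dummy variable: contradiction. Thus (i) holds, and the identity $\v_k = \v_k A_k$ follows at once by restricting $\v = \v A$ to $H_k$, since all cross-terms $A(i,j)$ with $i, j$ in distinct components vanish.

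Given (i), if $x \in X_A$ has some $\pi_n(x) \in H_k$, then $A(\pi_n(x), \pi_{n+1}(x)) > 0$ forces $\pi_{n+1}(x) \in H_k$ and similarly for all indices, yielding the clopen $T_A$-invariant decomposition $X_A = \bigsqcup_k X_{A_k}$; on cylinder sets $\mu_{A,\v}$ restricts term-by-term to $\mu_{A_k,\v_k}$, which extends to the full $\sigma$-algebra. For ergodicity of the $k$-th summand when $A_k$ is irreducible, the Perron-Frobenius theorem for non-negative irreducible matrices gives that $1$ is a simple eigenvalue of $A_k$ with $\v_k$ the unique (up to scalar) positive left eigenvector, from which standard Markov-chain arguments on cylinders yield ergodicity of $T_{A_k}$. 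When $A_k$ is primitive, the stronger form of Perron-Frobenius yields $A_k^n(i,j) \to \v_k(j)/\sum_\ell \v_k(\ell)$ as $n \to \infty$, and this convergence of $n$-step transition probabilities translates directly into mixing on cylinder sets, hence on all Borel sets by approximation. The main obstacle is step (i), where the positivity hypothesis on $\v$ must be converted into the absence of transient classes; after that, everything else reduces to standard Perron-Frobenius machinery.
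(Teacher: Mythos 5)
Your proposal is correct and follows essentially the same route as the paper: both build the directed graph on $H$ with an edge $i\to j$ exactly when $A(i,j)>0$, both use the conservation argument coming from $\v=\v A$, stochasticity, and positivity of $\v$ to rule out edges between classes (your condensation-DAG contradiction is just a repackaging of the paper's in-flux/out-flux computation on the reachability sets $R(i)$ and $B(i)$), and both then defer ergodicity and mixing to standard Perron--Frobenius/Markov-shift results. One small caveat to tighten: an arbitrary source of the condensation need not emit any outgoing edge, so you should choose your source among the components incident to at least one cross-edge (or argue as the paper does, directly with $R(i)=B(i)$); with that one-line fix the argument is complete.
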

\begin{proof}
Construct a graph on $H$ as follows.  We place a directed edge from $i \to j$ if and only if $A(i,j) > 0$.  As $\v$ is strictly positive, this is equivalent to the condition that $\v(i) A(i,j) > 0$.  We say that the flow or flux associated to this edge is $\v(i) A(i,j)$.  Now, the flow out of $i$ is \[ \sum_{j \in H} \v(i) A(i,j) = \v(i) \] as $A$ is a stochastic matrix.  The flow into $i$ is \[ \sum_{k \in H} \v(k) A(k,i) = \v(i) \] as $\v = \v A$.  So, we see that the flux into and out of $i$ are both equal to $\v(i)$.  

This implies that for every finite subset of $H$, the in-flux and out-flux will be equal.  For $i \in H$, let $R(i)$ be the set of points reachable from $i$, and $B(i)$ the set of points which can reach $i$.  Note that $R(i)$ has out-flux $0$ by construction, and $B(i)$ has in-flux $0$ by construction; as $H$ is finite, these subsets are finite, so both have in-flux and out-flux equal to $0$.  

Now, we can have no edges into or out of either of these two sets.  But, if $t \in R(i)$ and $y \in B(i)$, then there is a path from $y$ to $t$; so we must have $t \in B(i)$ and $y \in R(i)$, and so $B(i)=R(i)$.  So, $B(i)=R(i)$ is strongly connected, and there are no edges into or out of this set.

For $\ell>0$, note that $A^\ell(i,j) > 0$ is equivalent to there being a path of length precisely $\ell$ from $i$ to $j$.  It follows that the collection \[ \{ B(i) : i \in H \} \] gives our desired decomposition of $H$.

%
We readily note that $\v_k = \v_k A_k$ for for $k=1,\ldots,n$.  Then, as $A_k$ is irreducible, \cite[Theorem 1.19]{Walters} implies that the $k^\text{th}$ summand is ergodic, from which the ergodic decomposition follows.  Finally, \cite[Theorem 1.31]{Walters} implies that the $k^\text{th}$ summand is mixing if $A_k$ is primitive.
\end{proof}

\section{Analytic definitions, preliminaries, and notation}\label{sec:analyticNotation}
Let $K$ be a \emph{non-archimedean local field}, which we take to be either a finite field extension of $\QQ_p$ or $\FF_{p^n}((t))$ for some prime $p$.

Let $|\cdot|$ be a non-archimedean multiplicative valuation (sometimes called a ``non-archimedean absolute value'') on $K$, such that $|\cdot|$ generates the topology on $K$.  Denote $\VV = |K^\times|= \{ |x| : x \in K^\times\}$, $\OO = \{ x \in K : |x| \leq 1 \}$ and $\pp = \{ x \in K : |x| < 1 \}$ (note that $\OO, \pp$ are independent of the choice of valuation).   It is the case that $\OO$ is a ring with maximal idea $\pp$ and that $\OO/\pp$ is a finite field (the \emph{residue field}).  Let $p = \Char \OO/\pp$, $q= \Card{\OO/\pp}$, both finite with $q$ a power of $p$.  

We denote \[ B_r(x) = \{ y \in K : |x-y| \leq r\}, \] and call such a set (for any value of $r$) a \emph{ball}.  A ball of radius precisely $r$ will be called an \emph{$r$-ball}.  Let $\mu$ be Haar measure on $K$, normalized such that $\mu(\OO)=1$; define $\rho: \VV \to \RR_{>0}$ by $\rho(r) = \mu(B_r(0))$.

Now, we recall the following standard results:
\begin{enumerate}
\item $\OO$ is a discrete valuation ring with unique maximal ideal $\pp$;
\item $\pp = \pi \OO$ for any $\pi \in \pp \setminus \pp^2$; we call any such $\pi$ a \emph{uniformizing parameter}; 
\item $\VV$ is the discrete abelian (multiplicative) subgroup of $\QQ$ generated by $|\pi|$;  in light of this, we may define a map $v: K \to \ZZ \cup \{+\infty\}$ defined by $v(0) = +\infty$ and $v(x) = \log_{|\pi|} |x|$ for $x \in K^\times$; this is the additive valuation (sometimes just ``valuation'') on $K$;
\item For $r = |\pi|^k$, $k \geq 0$ it is the case that
\[ \rho(r) = \mu(B_r(0)) = \left(\Card{\OO/\pp^k}\right)^{-1} = q^{-k}. \]
Indeed, for $r \in \VV$ we see that $\rho(r) = q^{-\log_{|\pi|} r}$;
\item A subset $X \subseteq K$ is compact-open if and only if $X$ is a finite union of balls.
\end{enumerate}
We direct the interested reader to \cite{Serre} for a thorough treatment of related topics.

We will continue to use the symbols $K, \mu, p, q, |\cdot|, \OO, \pp, \pi, v, \VV, \rho, B_r$ with these meanings below.

Let $X$ be an open subset of $K$ and $a \in X$.  Then, we say that a function $f: X \to K$ is \emph{strictly differentiable} or \emph{$\C^1$} at $a$ (denoted $f \in \C^1(a)$) if the limit \[ \lim_{(x,y) \to (a,a)\atop{x \neq y}} \frac{f(x)-f(y)}{x-y} \] exists.  We write $f \in \C^1(X)$ if $f \in \C^1(a)$ for each $a \in X$.  For more on this notion, see \cite{Schikhof} or \cite{Robert}.


\section{Measure preserving $\C^1$ maps on non-archimedean local fields}\label{sec:mpC1}
\begin{defn}\label{defn:maM}
For $X \subseteq K$ compact-open, we say that a transformation $T: X \to X$ is \emph{\maM} for $r \in \VV$ if $X$ is a finite union of $r$-balls and if there exists a function $C: X \to \RR_{\geq 1}$ such that
\[ |x-y| \leq r \Rightarrow |T(x)-T(y)| = C(x) |x-y| \quad \text{for $x,y \in X$}. \]  We will refer to $C$ as the \emph{scaling function}. 
\end{defn}
\begin{remark}
Let us make the following observations about {\maM} transormations:
\begin{enumerate}
\item By the symmetry of $x$ and $y$ in the previous displayed equation, $C$ is constant on cosets of $B_r(0)$.  We will write $H = X/B_r(0)$ for the set of cosets of $B_r(0)$ contained in $X$ (recall that $X$ is a union of such cosets); we treat elements of $H$ as subsets of $X$.  Then, $C$ induces a map $C: H \to \RR_{\geq 1}$.
\item The terminology ``{\maM}'' is convenient but perhaps slightly misleading: For us, such transformations must not only locally scale distances, but must do so by a factor that is at least $1$.
\end{enumerate}
\end{remark}

This definition is motivated by the ease of analyzing the structure of such maps together with the following easy lemma:
\begin{lemma}\label{lem:C1Scaling}
Let $X \subset K$ be open, and suppose $f \in \C^1(X)$ is such that $f'(x) \neq 0$ for $x \in X$.  Then, $|f'(x)|$ is locally constant on $X$.  If, moreover, $X$ is compact, $f(X) \subset X$, and $|f'(a)| \geq 1$ or all $a \in X$, then the induced transformation $f: X \to X$ is {\maM} for some $r \in \VV$.
\end{lemma}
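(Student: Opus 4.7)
The plan is to first establish local constancy of $|f'|$ by exploiting the non-archimedean strong triangle inequality applied to the $\C^1$ estimate, and then bootstrap this to a uniform scaling radius using compactness.

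For local constancy, fix $a \in X$. The $\C^1$ condition at $a$ produces, for any $\epsilon > 0$, some $\delta > 0$ such that
\[ \left| \frac{f(x)-f(y)}{x-y} - f'(a) \right| < \epsilon \quad \text{for $x, y \in B_\delta(a)$ with $x \neq y$.} \]
Choosing $\epsilon < |f'(a)|$ (which is possible since $f'(a) \neq 0$) and invoking the non-archimedean strong triangle inequality, this forces $\bigl| \tfrac{f(x)-f(y)}{x-y} \bigr| = |f'(a)|$ for all such $x, y$. Fixing $x \in B_\delta(a)$ and letting $y \to x$ (using that $f$ is differentiable at $x$, since $f \in \C^1(x)$), we obtain $|f'(x)| = |f'(a)|$, so $|f'|$ is constant on $B_\delta(a)$.

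For the {\maM} assertion, notice that the displayed computation actually yields the exact identity $|f(x)-f(y)| = |f'(a)|\cdot|x-y|$ for all $x,y \in B_\delta(a)$ (including $x = y$). The collection of such balls $\{B_{\delta_a}(a)\}_{a \in X}$ is an open cover of $X$; by compactness extract a finite subcover $\{B_{\delta_i}(a_i)\}_{i=1}^n$. Choose $r \in \VV$ with $r \leq \min_i \delta_i$, further shrinking if necessary so that $X$ is a finite union of $r$-balls (possible because $X$ is compact-open, hence a finite union of balls, and $\VV$ is cofinal at $0$). Now given $x,y \in X$ with $|x-y| \leq r$, pick $i$ with $x \in B_{\delta_i}(a_i)$; the strong triangle inequality gives $|y - a_i| \leq \max(|y-x|, |x-a_i|) \leq \delta_i$, so $y \in B_{\delta_i}(a_i)$ as well. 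Hence $|f(x)-f(y)| = |f'(a_i)| \cdot |x-y| = |f'(x)| \cdot |x-y|$, and so $C(x) \eqdef |f'(x)|$ is a scaling function. The hypothesis $|f'(x)| \geq 1$ then yields $C \geq 1$, as required.

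I do not anticipate a serious obstacle. The only subtle bookkeeping is matching the definition of {\maM} exactly, particularly requiring $r \in \VV$ and $X$ to be a union of $r$-balls; this is handled by shrinking $r$ suitably at the end. The heart of the argument is just the non-archimedean amplification of a Lipschitz-type estimate into an exact equality, which is a standard trick in $p$-adic analysis.
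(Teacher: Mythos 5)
Your proof is correct and follows essentially the same route as the paper's: the $\C^1$ estimate with error below $|f'(a)|$, the strong triangle inequality upgrading it to the exact scaling identity $|f(x)-f(y)| = |f'(a)|\,|x-y|$, and a finite subcover to extract a uniform $r \in \VV$. The only differences are cosmetic (you derive local constancy of $|f'|$ by letting $y \to x$, and you spell out the bookkeeping that $X$ is a union of $r$-balls and that $|x-y| \leq r$ keeps $y$ in the same covering ball), which the paper leaves implicit.
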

\begin{proof} 
Fix $a \in X$. Since $X$ is open and $f \in \C^1(a)$, there exists $r_a \in \VV$ so that $B_{r_a}(a) \subset X$ and
\[ \left| \frac{f(x) - f(y)}{x-y} - f'(a)\right| < |f'(a)|  \qquad \text{for $x, y \in B_{r_a}(a)$, $x \neq y$}. \]  By the strong triangle inequality, it follows that $|f(x) -f(y)| = |f'(a)| |x-y|$ for all $x, y \in B_r(a)$.  In particular, we note that $|f'(x)|$ is constant on $B_{r_a}(a)$.

If $X$ is compact, then there is a finite set $a_0, \ldots, a_k \in X$ so that $B_{r_{a_0}}(a_0), \ldots, B_{r_{a_k}}(a_k)$ is an open cover of $X$.  Taking $r \leq \min_{i=0}^{k} r_{a_k}$, we see that $X$ is a union of $r$-balls and that $f$ restricted to each $B_r(a)$ scales distance by $|f'(a)| \geq 1$  by the previous displayed equation.  So, $f$ is {\maM} for $r \in \VV$ with scaling function $C(x) = |f'(x)|$.
\end{proof}

\begin{lemma}\label{lem:C1Deriv}
Let $X \subseteq K$ be open.  Let $f \in \C^1(X)$ be such that $f(X) \subseteq X$ and such that the transformation $f: X \to X$ is measure-preserving with respect to $\res{\mu}{X}$.  Then, $|f'(a)| \geq 1$ for all $a \in X$.  Furthermore, $f$ is {\maM} for some $r \in \VV$.
\end{lemma}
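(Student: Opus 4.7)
The plan is to argue by contradiction: assuming $|f'(a)| < 1$ at some $a \in X$, I will produce a ball around $a$ whose image under $f$ lies inside a strictly smaller ball, in violation of the measure-preservation hypothesis. Once the bound $|f'(a)| \geq 1$ is established everywhere, the second assertion follows immediately from Lemma~\ref{lem:C1Scaling} applied to $X$ (which must be compact-open for the {\maM} conclusion to make sense).

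Suppose $a \in X$ satisfies $|f'(a)| < 1$. Since $f'(a) \in K$ and $\VV = |K^\times|$ is discrete, either $f'(a) = 0$ or $|f'(a)| \leq |\pi|$; in either case $|f'(a)| \leq |\pi|$. Applying the $\C^1$ hypothesis at $a$ with $\epsilon = |\pi| \in \RR_{>0}$ produces $r \in \VV$ with $B_r(a) \subseteq X$ such that
\[ \left\lvert \frac{f(x)-f(y)}{x-y} - f'(a) \right\rvert < |\pi| \qquad \text{for all distinct } x, y \in B_r(a). \]
Since both $|f'(a)|$ and the error term on the left have norm at most $|\pi|$, the strong triangle inequality forces $|(f(x)-f(y))/(x-y)| \leq |\pi|$, so $f(B_r(a)) \subseteq B_{|\pi|r}(f(a))$.

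Setting $B' = B_{|\pi|r}(f(a)) \cap X$, we have $B_r(a) \subseteq f^{-1}(B')$, so measure-preservation yields
\[ \mu(B_r(a)) \leq \mu(f^{-1}(B')) = \mu(B') \leq q^{-1}\mu(B_r(a)), \]
which is impossible since $q \geq 2$ and $\mu(B_r(a)) > 0$. Hence $|f'(a)| \geq 1$ for every $a \in X$, and Lemma~\ref{lem:C1Scaling} finishes the proof. The main subtlety lies in the first step: converting the strict inequality $|f'(a)| < 1$ into the usable bound $|f'(a)| \leq |\pi|$ relies crucially on the discreteness of $\VV$, and it is this discreteness together with the strong triangle inequality that turns the $\C^1$-approximation into a genuine strict contraction on a full ball, from which the volume-counting contradiction with measure-preservation then drops out.
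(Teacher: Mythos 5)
Your proof is correct and follows essentially the same route as the paper's: assume $|f'(a)|<1$, use strict differentiability together with the strong triangle inequality to show a small ball around $a$ maps into a ball of strictly smaller measure, contradict measure-preservation, and then invoke Lemma~\ref{lem:C1Scaling}. Your use of the discreteness of $\VV$ to replace $|f'(a)|<1$ by $|f'(a)|\leq|\pi|$ plays exactly the role of the paper's choice of $\alpha\in\VV$ with $|f'(a)|\leq\alpha<1$, and intersecting the target ball with $X$ is an equally valid alternative to the paper's shrinking of $r$ so that the target ball lies in $X$.
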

\begin{proof}
Suppose there is an $a \in X$ with $|f'(a)| < 1$.  Take $\alpha \in \VV$ such that $|f'(a)| \leq \alpha < 1$.  As $X$ is open, there exists $r' \in \VV$ such that $B_{r'}(a) \subseteq X$ and $B_{r'}(f(a)) \subseteq X$.  Moreover, as $f \in \C^1(a)$ we may take $r \in \VV$, with $r \leq r'$, such that 
\[ \left| f(x)-f(y) - f'(a)(x-y)\right| \leq \alpha\left|x-y\right| \quad\text{for any $x,y \in B_r(a)$.} \] It follows, by the strong triangle inequality, that $\left|f(x) - f(y)\right| \leq |x-y| \alpha$ for $x,y \in B_r(a)$.  So, $B_{\alpha r}(f(a)) \subseteq X$ by construction and moreover $f^{-1}\left(B_{\alpha r}(f(a)) \right) \supseteq B_r(a)$.  Taking measures we note that
\[\mu\left(B_{\alpha r}(f(a))\right) = \rho(\alpha r) < \rho(r) = \mu\left(B_r(a)\right) \leq \mu\left(f^{-1}\left(B_{\alpha r}(f(a))\right)\right).\]  So, $f$ is not measure-preserving.

This proves that $|f'(a) \geq 1$ for all $a \in X$.  The remaining part of the claim follows from Lemma~\ref{lem:C1Scaling}.
\end{proof}

\begin{remark}\label{rem:polyC1}
Note that for $f \in K[x]$, $f \in \C^1(K)$.  So, if $f(X) \subset X$ induces a measure-preserving transformation $f: X \to X$, then Lemma~\ref{lem:C1Deriv} and Lemma~\ref{lem:C1Scaling} imply that $f$ is {\maM}.
\end{remark}

%
%
%
%
\begin{example}\label{ex:Choose2}
Consider the map $f: \ZZ_2 \to \ZZ_2$ defined by \[ f(x) = {x \choose 2} = \frac{x(x-1)}{2}.\]  Then, \[ |f(x)-f(y)|=\left\lvert\frac{(x-y)(x+y-1)}{2}\right\rvert=2|x-y| |x+y-1| \]  So, $f$ is {\maM} for $r = 1/2$, since $|x+y-1|=1$ when $|x-y| \leq 1/2$.  We will see in Section~\ref{sec:polBern} that $f:\ZZ_2 \to \ZZ_2$ is actually measure-preserving and in fact Bernoulli.
\end{example}

\begin{remark}\label{rem:JK2} Suppose $f \in K[x]$ defines a transformation $f: \ZZ_p \to \ZZ_p$.  Since $f$ is polynomial, we have $f \in \C^1(\ZZ_p)$ and thus, by Lemma~\ref{lem:C1Scaling}, is {\maM} for some $r \in \VV$.  We can use the Taylor expansion of $f$ to find such an $r$ (this idea is similar to that in \cite[p. 33, Lemma 1.6]{KhrennikovNilson}).  Specifically, writing
\[ f(x+z) - f(x) = z f'(x) + \sum_{k=2}^{\deg f} \frac{z^k}{k!} f^{(k)}(x) \] we note by the strong triangle inequality that it suffices to choose $r$ so that \[ \left| \frac{f^{(k)}(x)}{k!} \right| r^{k-1} < |f'(x)| \]  for all $x \in \ZZ_p$ and $n \geq 2$.
\end{remark}

%
%
%

\section{Structure of {\maM} transformations}\label{sec:structMaM}
\begin{lemma}\label{lem:maMScale}
Let $X \subseteq K$ be compact-open, with $T: X \to X$ {\maM} for $r \in \VV$.  Let $C: X \to \RR_{\geq 1}$ be the scaling function of Definition~\ref{defn:maM}.  Then, for each $a \in X$ and $r' \in \VV$ with $r' \leq r$, the map
\[ \left. T \right|_{B_{r'}(a)}: B_{r'}(a) \to B_{r' C(a)}(T(a)) \] is a bijection.
\end{lemma}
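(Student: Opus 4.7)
The plan is to verify well-definedness, injectivity, and surjectivity of the restriction, with surjectivity as the main difficulty. Well-definedness follows because $B_{r'}(a) \subseteq B_r(a)$, so by the remark after Definition~\ref{defn:maM} the scaling function satisfies $C(x) = C(a)$ throughout $B_{r'}(a)$; applying the defining identity with $y = a$ then gives $|T(x) - T(a)| = C(a)|x - a| \leq C(a) r'$, placing $T(x)$ in $B_{r'C(a)}(T(a))$. Injectivity is immediate from the scaling identity together with $C(x) \geq 1 > 0$.

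For surjectivity, the plan is to reduce to the claim that every isometric self-map of a compact non-archimedean ball is bijective. I would first observe that $C(a) = |T(x) - T(y)|/|x - y|$ (for any $x \neq y$ in $B_r(a)$) is a ratio of elements of $\VV$ and hence lies in $\VV$, so one can pick $c \in K^\times$ with $|c| = C(a)$ and define the $K$-affine map $S(x) = c^{-1}(T(x) - T(a)) + a$. A short computation will show that $S$ is an isometry of $B_{r'}(a)$ fixing $a$, and bijectivity of $\left. T \right|_{B_{r'}(a)}$ onto $B_{r'C(a)}(T(a))$ becomes equivalent to bijectivity of $S$ on $B_{r'}(a)$.

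To finish, I would show that any isometric self-map $S$ of the compact ball $B = B_{r'}(a)$ is surjective. Fixing a target $b \in B$, consider the filtration of $B$ by cosets of the sub-balls $B_{r'|\pi|^k}(0)$ for $k \geq 0$. Since $S$ is an isometry, it sends each such coset into a single coset and induces an injective (hence, by finiteness, bijective) map on the finite set of cosets at each level; for each $k$ there is then a unique coset $U_k \subseteq B$ with $S(U_k) \subseteq B_{r'|\pi|^k}(b)$, and these $U_k$ form a decreasing chain of non-empty compact sets. Any common point $x \in \bigcap_k U_k$ satisfies $S(x) \in \bigcap_k B_{r'|\pi|^k}(b) = \{b\}$.

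The main obstacle is the surjectivity step: recognizing that one should rescale by an element of $K$ of absolute value $C(a)$ to reduce to an isometry, then exploit the interplay between compactness and finite quotients of non-archimedean balls to produce preimages. The remaining verifications are routine applications of the ultrametric inequality.
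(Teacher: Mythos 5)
Your proof is correct, and its engine is the same as the paper's: the scaling identity makes $T$ (equivalently, your rescaled isometry $S$) induce an injective, hence bijective, map on the finite set of cosets of $B_{r'|\pi|^k}(0)$ at every level $k$, and compactness then upgrades this to surjectivity on the ball itself. The only real divergence is in how that last upgrade is executed: the paper concludes from the level-$k$ coset count that $T(B_{r'}(a))$ is dense in $B_{r'C(a)}(T(a))$ and then invokes that a continuous image of a compact set is closed, whereas you fix a target $b$, extract the unique nested chain of cosets $U_k$ mapping into $B_{r'|\pi|^k}(b)$, and produce a preimage as a point of $\bigcap_k U_k$ via the finite intersection property. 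Your version is marginally more constructive (it locates the preimage), at the cost of the extra bookkeeping of checking that the $U_k$ nest; the preliminary conjugation by $c$ with $|c| = C(a)$ is harmless but unnecessary, since the coset-counting works just as well for a map that scales all distances by the fixed factor $C(a)$.
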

\begin{proof}
Denote $B = B_{r'}(a)$ and $B' = B_{r' C(a)}(T(a))$.  As $C$ is constant on $B_r(a)$, $|Tx-Ty|=C(x) |x-y| = C(a) |x-y|$ for all $x,y \in B \subseteq B_r(a)$.  This implies that $T(B) \subseteq B'$, so our restriction is well-defined.  It also implies that the restriction is injective.

For each $k \geq 0$ we may take coset representatives $a_0,\ldots,a_{q^k-1}$ for $B/B_{r' |\pi|^k}(0)$. Then for $i,j \in \{ 0,\ldots,q^{k}-1\}$ we have 
\[ |T(a_i)-T(a_j)| = C(a) |a_i-a_j| > r' C(a) |\pi|^k. \]  So, $T(a_0),\ldots,T(a_{q^k-1})$ are precisely the $q^k$ coset representatives for $B'/B_{r' C(a) |\pi|^k}(0)$.  It follows that $T(B)$ is dense in $B'$.

Now, note that $\left. T \right|_{B}$ is continuous.  So, $T(B)$ is the continuous image of a compact set, thus compact, and so closed.  So, $T(B)=B'$.  This proves surjectivity, and the lemma is proved.
\end{proof}

\begin{corollary}\label{cor:maMScale}
Let $X, T, C$ be as in Lemma~\ref{lem:maMScale}.  Set $H=X/B_r(0)$.  For any $i,j \in H$, $a \in j$, and $r' \in \VV$ with $r' \leq r$, the set
\[ i \cap T^{-1}\left(B_{r'}(a) \right) \]
is either the empty set or a ball of radius $r'/C(i)$, according as whether $i \cap T^{-1}(j)$ is empty or not.
\end{corollary}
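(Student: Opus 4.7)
My proof plan is to leverage the bijection established by Lemma~\ref{lem:maMScale} together with the ultrametric fact that, since $r' \leq r$ and $a \in j$, we have $B_{r'}(a) \subseteq B_r(a) = j$.

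\textbf{Empty case.} Assume $i \cap T^{-1}(j) = \emptyset$. Since $B_{r'}(a) \subseteq j$, it is immediate that $i \cap T^{-1}(B_{r'}(a)) \subseteq i \cap T^{-1}(j) = \emptyset$.

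\textbf{Nonempty case.} Assume $i \cap T^{-1}(j) \neq \emptyset$ and choose some $x_0 \in i$ with $T(x_0) \in j$. I want to produce a point $x \in i$ with $T(x) \in B_{r'}(a)$, then identify the full preimage. Applying Lemma~\ref{lem:maMScale} with $r' = r$ and $a = x_0$ gives that $T$ maps $i = B_r(x_0)$ bijectively onto $B_{rC(i)}(T(x_0))$. Because $C(i) \geq 1$ and $T(x_0) \in j = B_r(a)$, this target ball contains $j$ and hence contains $B_{r'}(a)$; by surjectivity there is some $x \in i$ with $T(x) \in B_{r'}(a)$.

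Now apply Lemma~\ref{lem:maMScale} to this $x$ at radius $r'/C(i) \leq r$ (note $C(x) = C(i)$ since $C$ is constant on $i$): $T$ maps $B_{r'/C(i)}(x)$ bijectively onto $B_{r'}(T(x))$. Since $T(x) \in B_{r'}(a)$, ultrametricity gives $B_{r'}(T(x)) = B_{r'}(a)$, and thus $B_{r'/C(i)}(x) \subseteq i \cap T^{-1}(B_{r'}(a))$. For the reverse inclusion, suppose $x' \in i \cap T^{-1}(B_{r'}(a))$. Then $x, x' \in i$ so $|x - x'| \leq r$, and the locally scaling property yields
\[ |T(x) - T(x')| = C(i)|x - x'|. \]
Since $T(x), T(x') \in B_{r'}(a)$, the left-hand side is at most $r'$, so $|x - x'| \leq r'/C(i)$, placing $x'$ in $B_{r'/C(i)}(x)$. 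Hence $i \cap T^{-1}(B_{r'}(a)) = B_{r'/C(i)}(x)$, a ball of radius $r'/C(i)$.

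The only real delicacy is bookkeeping the radii and confirming that the target ball $B_{r'}(T(x))$ actually coincides with $B_{r'}(a)$, but this is immediate from the ultrametric structure. Every other step is a direct invocation of Lemma~\ref{lem:maMScale} or Definition~\ref{defn:maM}.
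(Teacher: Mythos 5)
Your proof is correct and follows essentially the same route as the paper's: dispose of the empty case via $B_{r'}(a)\subseteq j$, use Lemma~\ref{lem:maMScale} at radius $r$ to locate a preimage point $x\in i$ of $B_{r'}(a)$, then apply the lemma again at radius $r'/C(i)$ around $x$ to identify the preimage as the ball $B_{r'/C(i)}(x)$. Your explicit verification of the reverse inclusion via the scaling identity is a point the paper leaves implicit, but it is the same argument.
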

\begin{proof}
Denote $B = B_{r'}(a)$.  Assume $i \cap T^{-1}(j)$ is not empty, so there is a $y \in i \cap T^{-1}(j)$.
Then, the map \[ \left. T \right|_{B_r(y)}: i= B_r(y) \to B_{r C(i)}(T(y)) \supseteq j \supseteq B \] is a bijection.  It follows that $i \cap T^{-1}(B)$ is non-empty, and we may in fact assume that $y \in i \cap T^{-1}(B)$.

Then, as
\[ \left. T \right|_{B_{r'/C(i)}(y)}: B_{r'/C(i)}(y) \to B_{r'}(T(y)) = B \]
is also a bijection, it follows that $i \cap T^{-1}(B) = B_{r'/C(i)}(y)$.
\end{proof}

\begin{defn}
Let $X \subseteq K$ be compact-open, and let $T: X \to X$ be {\maM} for $r \in \VV$.  Let $H = X/B_r(0)$ and $C: H \to \RR_{\geq 1}$ be the scaling function.  Then, we define the \emph{associated transition matrix} to be the map $A: H^2 \to \RR_{\geq 0}$ given by, for $i,j \in H$,
\[ A(i,j) = \begin{cases} 0 & i \cap T^{-1}(j) = \emptyset \\ \rho\left(1/C(i)\right) & \text{otherwise} \end{cases} \]
\end{defn}

\begin{lemma}\label{lem:maMMP}
Let $X \subseteq K$ be compact-open and $T: X \to X$ be {\maM} for $r \in \VV$; let $H = X/B_r(0)$ and let $A: H^2 \to \RR_{\geq 0}$ be the associated transition matrix.  Then:
\begin{enumerate}
\item  For $S \subseteq X$  measurable and $i \in H$ 
\[ \mu(i \cap T^{-1}(S)) = \sum_{j \in H} \mu(S \cap j) A(i,j). \]
\item $A(i,j) = \frac{1}{\rho(r)} \mu(i \cap T^{-1}(j));$
\item $A$ is a stochastic matrix on $H$;
\item $T$ is measure-preserving if and only if the sum of each column of $A$ is $1$.
\end{enumerate}
\end{lemma}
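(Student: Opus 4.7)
The plan is to prove (i) as the main statement---first for balls using Corollary~\ref{cor:maMScale}, then for general measurable sets by the $\pi$-$\lambda$ theorem---and to deduce (ii), (iii), (iv) from it by short computations.

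For the ball case of (i), take $S = B_{r'}(a) \subseteq j$ with $r' \leq r$ and $a \in j$. Corollary~\ref{cor:maMScale} says that $i \cap T^{-1}(S)$ is either empty (exactly when $i \cap T^{-1}(j) = \emptyset$, equivalently $A(i,j) = 0$) or a ball of radius $r'/C(i)$; in the latter case its measure is $\rho(r'/C(i)) = \rho(r') \rho(1/C(i)) = \mu(S) A(i,j)$, using the multiplicativity of $\rho$ on $\VV$ (which follows from $\VV = \{|\pi|^k : k \in \ZZ\}$ and $\rho(|\pi|^k) = q^{-k}$). Part (ii) drops out as the special case $r' = r$, $S = j$. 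For an arbitrary measurable $S$, I would decompose $S = \bigsqcup_{j \in H} (S \cap j)$ to reduce to the case $S \subseteq j$. On $j$, the two set functions $S \mapsto \mu(i \cap T^{-1}(S))$ and $S \mapsto A(i,j) \mu(S)$ are finite Borel measures that agree on the $\pi$-system of balls contained in $j$, so they agree throughout the Borel $\sigma$-algebra by Dynkin's $\pi$-$\lambda$ theorem. Summing the identity $\mu(i \cap T^{-1}(S \cap j)) = \mu(S \cap j) A(i,j)$ over $j$ yields (i).

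Part (iii) then follows by applying (i) with $S = X$: since $T(X) \subseteq X$ forces $i \cap T^{-1}(X) = i$, we get $\rho(r) = \mu(i) = \sum_{j} \mu(j) A(i,j) = \rho(r) \sum_{j} A(i,j)$, so each row of $A$ sums to $1$. For (iv), summing (i) over $i \in H$ gives $\mu(T^{-1}(S)) = \sum_{j} \mu(S \cap j) \sum_{i} A(i,j)$; comparing with $\mu(S) = \sum_{j} \mu(S \cap j)$ shows that $T$ is measure-preserving if and only if every column sum $\sum_{i} A(i,j)$ equals $1$ (the ``only if'' direction is obtained by taking $S = j$).

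The only step that is not essentially bookkeeping is the passage from balls to general measurable sets in part (i), and this is the routine direction of a standard $\pi$-$\lambda$ argument; I do not anticipate any serious obstacle.
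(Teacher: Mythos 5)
Your proposal is correct and follows essentially the same route as the paper: reduce (i) to the case of a ball $S = B_{r'}(a) \subseteq j$, invoke Corollary~\ref{cor:maMScale} to identify $i \cap T^{-1}(S)$ as empty or a ball of radius $r'/C(i)$, extend to general measurable sets by a standard measure-theoretic argument (the paper appeals to the balls forming a sufficient semi-ring where you spell out the $\pi$-$\lambda$ theorem), and then derive (ii)--(iv) by the same specializations and summations. No gaps.
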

\begin{proof}
\mbox{}\\
{\noindent}{\bf (i):}\\ By disjoint additivity of $\mu$, it suffices to prove the equality in the case $S \subseteq j$ for some $j \in H$.  As the balls form a sufficient semi-ring in the Borel $\sigma$-algebra of $X$, we may in addition assume that $S$ is a ball.  Say $S = B_{r'}(a)$ for $r' \leq r$ and $a \in j$.  Then, by Corollary~\ref{cor:maMScale} we know that $i \cap T^{-1}(S)$ is either the empty set or a ball of radius $r'/C(i)$, according as whether $i \cap T^{-1}(j)$ is empty or not.  Taking measures we get
\begin{align*} 
\mu\left( i \cap T^{-1}(S) \right) &= \begin{cases} 0 & i \cap T^{-1}(j) = \emptyset \\ \rho(r'/C(i)) & \text{otherwise} \end{cases} \\ 
&= \mu(S) A(i,j) = \sum_{j \in H} \mu(S \cap j) A(i,j).
\end{align*}

\medskip
{\noindent}{\bf (ii):}\\ Put $S = j$ in (i).  Then we get
\[ \mu(i \cap T^{-1}(j)) = \mu(j \cap j) A(i,j) = \rho(r) A(i,j). \]

\medskip
{\noindent}{\bf (iii):}\\
Note that for each $i \in H$, by disjoint additivity of $\mu$ along with (ii) we have
\[ \rho(r) \sum_{j \in H} A(i,j) = \sum_{j \in H} \mu(i \cap T^{-1}(j)) = \mu(i \cap X) = \mu(i) = \rho(r). \]

\medskip
{\noindent}{\bf (iv):}\\
If $T$ is measure-preserving then for each $j \in H$ we have, by disjoint additivity of $\mu$,
\[ \sum_{i \in H} A(i,j) = \frac{1}{\rho(r)} \sum_{i \in H} \mu(i \cap T^{-1}(j)) = \frac{1}{\rho(r)} \mu(X \cap T^{-1}(j)) = \frac{1}{\rho(r)} \mu(T^{-1}(j)) =  1. \]

For the converse we use (i) and disjoint additivity:
\[ \mu(T^{-1}(S)) = \sum_{i \in H} \mu(i \cap T^{-1}(S)) = \sum_{i,j \in H} \mu(S \cap j) A(i,j) = \sum_{j \in H} \mu(S \cap j) = \mu(S). \qedhere\]
\end{proof}

\begin{prop}\label{prop:maMPhi}
Let $X \subseteq K$ be compact-open, let $T: X \to X$ be a {\maM} transformation for $r \in \VV$, with $\Sigma=(X,\mu,T)$ the corresponding measurable dynamical system.  

Let $H=X/B_r(0)$, let $A: H^2 \to \RR_{\geq 0}$ be the associated transition matrix, and $\v: H \to \RR_{\geq 0}$ the positive row vector given by $\v(i) = \rho(r)$ for $i \in H$.  Let $\Sigma' = (X_A,\mu_{A,\v},T_A)$ be the corresponding Markov shift.

Then, there exists a continuous, measure-preserving surjection $\Phi: X \to X_A$ satisfying $\Phi \circ T = T_A \circ \Phi$.  Moreover, the pre-image under $\Phi$ of a cylinder set is a ball of the same measure.
\end{prop}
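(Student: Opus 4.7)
The plan is to define $\Phi$ explicitly by ``reading off the coset itinerary.'' For $x \in X$ and each $n \geq 0$, let $i_n(x) \in H$ denote the unique coset of $B_r(0)$ containing $T^n(x)$, and set $\Phi(x) = (i_0(x), i_1(x), i_2(x), \ldots)$. Since $T^n(x) \in i_n(x) \cap T^{-1}(i_{n+1}(x))$, this intersection is non-empty and hence $A(i_n(x), i_{n+1}(x)) \neq 0$, so $\Phi(x) \in X_A$. The intertwining $\Phi \circ T = T_A \circ \Phi$ is immediate from $i_n(T(x)) = i_{n+1}(x)$.

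The heart of the argument is to identify $\Phi^{-1}$ of a cylinder set. For an admissible finite sequence $d_0, \ldots, d_\ell \in H$, write $V_\ell(d_0, \ldots, d_\ell) = \bigcap_{n=0}^{\ell} T^{-n}(d_n)$, so that $\Phi^{-1}([d_0 \cdots d_\ell]) = V_\ell(d_0, \ldots, d_\ell)$. I will prove by induction on $\ell$ that $V_\ell$ is a ball of radius $r \cdot \prod_{i=0}^{\ell-1} C(d_i)^{-1}$ contained in $d_0$. The base case $\ell = 0$ gives $V_0 = d_0$, a ball of radius $r$. For the inductive step, observe that
\[ V_\ell(d_0, \ldots, d_\ell) = d_0 \cap T^{-1}\bigl( V_{\ell-1}(d_1, \ldots, d_\ell) \bigr). \]
By induction $V_{\ell-1}(d_1, \ldots, d_\ell)$ is a ball $B_{r'}(a)$ of radius $r' = r \prod_{i=1}^{\ell-1} C(d_i)^{-1} \leq r$ contained in $d_1$, and admissibility gives $d_0 \cap T^{-1}(d_1) \neq \emptyset$, so Corollary~\ref{cor:maMScale} applies to show $V_\ell$ is a ball of radius $r'/C(d_0) = r \prod_{i=0}^{\ell-1} C(d_i)^{-1}$. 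Using the multiplicativity of $\rho$ on $\VV$ and the definition of $A$, the measure of $V_\ell$ is
\[ \mu(V_\ell) = \rho(r) \prod_{i=0}^{\ell-1} \rho(C(d_i)^{-1}) = \rho(r) \prod_{i=0}^{\ell-1} A(d_i, d_{i+1}) = \mu_{A,\v}([d_0 \cdots d_\ell]). \]
This simultaneously gives the ``ball of the same measure'' claim. I expect this induction to be the main technical step, though it is really just a bookkeeping exercise on top of Corollary~\ref{cor:maMScale}.

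The remaining properties follow formally. Continuity of $\Phi$ follows from the fact that the cylinder sets form a base for the topology of $X_A$, and each preimage $V_\ell$ is a ball, which is open in $X$. Measure-preservation is immediate on the semi-ring of cylinder sets (which generates the Borel $\sigma$-algebra of $X_A$) by the computation above, and hence extends to all measurable sets via the Carathéodory/$\pi$-$\lambda$ machinery. Finally, for surjectivity, given any $(d_0, d_1, \ldots) \in X_A$, the sets $V_\ell(d_0, \ldots, d_\ell)$ form a nested decreasing sequence of non-empty compact (being clopen in the compact space $X$) balls, so their intersection is non-empty by compactness, and any point in the intersection maps to $(d_0, d_1, \ldots)$ under $\Phi$.
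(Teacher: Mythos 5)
Your proposal is correct and follows essentially the same route as the paper: the same itinerary definition of $\Phi$ and the same induction on cylinder length via Corollary~\ref{cor:maMScale}, with your explicit radius bookkeeping $r\prod_i C(d_i)^{-1}$ replacing the paper's appeal to Lemma~\ref{lem:maMMP}(i) for the measure computation. The only real divergence is surjectivity, where you argue directly via compactness of the nested non-empty balls $V_\ell$ while the paper deduces it from $\Phi$ being continuous and measure-preserving (closed image of full measure must be everything); both are valid, and yours is the more constructive of the two.
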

\begin{proof}
For each $n \geq 0$, let $\pi_{n}: X_A \to H$ denote projection to the $n^\text{th}$ coordinate.  Let  $\phi: X \to H$ be the canonical projection.
Consider the map $\Phi: X \to X_A$ defined by  \[ \pi_{n} \circ \Phi = \phi \circ T^{n},\] i.e., the $n^\text{th}$ slot in $X_A$ denotes which element of $H$ the point $T^n(x)$ is in.  Then $\Phi \circ T = T_A \circ \Phi$ by construction.

Let $d_0,d_1,\ldots \in H$.   We will prove by induction on the number of slots specified (the ``length'' of the cylinder set $[d_0 \ldots d_\ell]$) the claim that the pre-image of the cylinder set $[d_0 \ldots d_\ell]$ is a ball of the same measure as the cylinder set.
Note that $\Phi^{-1}([d_0]) = d_0$ is a ball of the correct measure as \[ \mu_{A,\v}([d_0]) = \v(d_0) = \rho(r) = \mu(d_0) \] by construction.  Now, \[ \Phi^{-1}([d_0 \ldots d_{\ell}]) = \Phi^{-1}([d_0]) \cap T^{-1} \Phi^{-1}\left([d_1 \ldots d_\ell]\right).\]  By the inductive hypothesis, this is the intersection of two balls, and is thus again a ball; so $\Phi$ is continuous.

Noting that $\Phi^{-1}([d_1 \ldots d_\ell]) \subseteq \Phi^{-1}[d_1] = d_1$ and applying claim (i) of Lemma~\ref{lem:maMMP}, along with the inductive hypothesis, we see that this ball has the correct measure
\begin{align*} \mu\left( \Phi^{-1}([d_0 \ldots d_{\ell}]) \right) &= \sum_{j \in H} \mu\left(j \cap \Phi^{-1}([d_1 \ldots d_\ell]) \right) A(d_0,j) \\ &= A(d_0,d_1) \mu\left(\Phi^{-1}([d_1 \ldots d_\ell]) \right) \\ &= A(d_0, d_1) \mu([d_1 \ldots d_\ell]) = \mu([d_0 \ldots  d_\ell]). \end{align*}  As the cylinder sets are a sufficient semi-ring in the Borel $\sigma$-algebra of $X_A$, this shows that $\Phi$ is measure-preserving.  

Note that $\Phi$ continuous and measure-preserving implies $\Phi$ surjective: $X$ is compact and $X_A$ is Hausdorff, so the image must be closed. However, the image must have full measure and so must be dense ($\v$ positive implies that all cylinder sets, hence all open sets, have strictly positive measure).
\end{proof}

\begin{theorem}\label{thm:maMMPStruct}
Let $X, T, H, \Sigma, \Sigma', \Phi$ be as in Proposition~\ref{prop:maMPhi}.  Moreover, assume that $\Sigma$ is measure-preserving, so that $\Sigma'$ is as well.

Now, let $H = \bigsqcup_{k=1}^{n} H_k$ be a decomposition of $H$ in the sense of Proposition~\ref{prop:ergDecMarkov}, so that
\[ \Sigma' = \bigsqcup_{k=1}^{n} \Sigma'_k \] where $\Sigma'_k = (X_{A_k},\mu_{A_k,\v},T_{A_k})$.

For $k=1,\ldots,n$ define
\[ \sq{\Sigma}_k = \begin{cases} \text{restriction of $\Sigma$ to $\Phi^{-1}(X_{A_k})$} & \Card{X_{A_k}} < \infty \\ \Sigma'_k & \text{otherwise} \end{cases} \]
then we have an isomorphism of topological and measurable dynamical systems
\[ \Sigma \isom \bigsqcup_{k=1}^{n} \sq{\Sigma}_k. \]
Moreover, each term in this decomposition is either locally an isometry or ergodic Markov, according as whether $\Card{X_{A_k}} < \infty$ or not.
\end{theorem}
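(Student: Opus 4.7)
The plan is to define $Y_k = \Phi^{-1}(X_{A_k})$ for each $k$ and identify $\Sigma|_{Y_k}$ with $\sq{\Sigma}_k$. By Prop~\ref{prop:ergDecMarkov}(i), every sequence in $X_A$ lies entirely within a single $H_k^{\NN}$, so the $X_{A_k}$ form a $T_A$-invariant partition of $X_A$; since $\Phi$ is surjective (Prop~\ref{prop:maMPhi}), the $Y_k$ partition $X$, and each $Y_k$ is $T$-invariant because $\Phi \circ T = T_A \circ \Phi$. This immediately yields $\Sigma = \bigsqcup_{k=1}^{n} \Sigma|_{Y_k}$ as topological and measurable dynamical systems, so only the identification on each component remains.

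By the definition of $A$, every nonzero entry of row $A_k(j, \cdot)$ equals $\rho(1/C(j))$, and the row sums to $1$ (Lemma~\ref{lem:maMMP}(iii)). Hence $|X_{A_k}| < \infty$ iff $A_k$ is a permutation matrix iff $C \equiv 1$ on $H_k$. In case (a), $C \equiv 1$ on $H_k$, and Lemma~\ref{lem:maMScale} with $r' = r$ shows that $T$ maps each coset $i \in H_k$ isometrically onto some coset $T(i) \in H_k$. Thus $T|_{Y_k}$ is locally an isometry, and since $\sq{\Sigma}_k = \Sigma|_{Y_k}$ by definition in this case, we are done.

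The substantive work is in case (b). Set $D = \{j \in H_k : C(j) = 1\}$, a proper subset of $H_k$. The key step is to show that every $(d_n) \in X_{A_k}$ visits $H_k \setminus D$ infinitely often: if some tail lay entirely in $D$, then since each $j \in D$ has a unique $A_k$-successor $\sigma(j)$, the tail would eventually enter a $\sigma$-periodic orbit $D' \subseteq D$, from which no state can $A_k$-reach $H_k \setminus D' \supseteq H_k \setminus D \neq \emptyset$, contradicting irreducibility of $A_k$. Given this, and using $C(d_n) \geq q$ for $d_n \notin D$, we get $\prod_{n=0}^{N-1} C(d_n) \to \infty$. Iterating Cor~\ref{cor:maMScale} shows $\Phi^{-1}([d_0 \ldots d_N])$ is a ball of radius $r/\prod_{n=0}^{N-1} C(d_n)$, so the nested intersection $\Phi^{-1}(\{\Phi(x)\}) = \{x\}$ is a single point, i.e.\ $\Phi|_{Y_k}$ is injective. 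As a continuous bijection from the compact $Y_k$ onto the Hausdorff $X_{A_k}$, $\Phi|_{Y_k}$ is a homeomorphism; being measure-preserving and intertwining $T$ with $T_{A_k}$ (Prop~\ref{prop:maMPhi}), it furnishes the desired isomorphism $\Sigma|_{Y_k} \cong \Sigma'_k = \sq{\Sigma}_k$, which is ergodic Markov by Prop~\ref{prop:ergDecMarkov}.

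The hard part is the ``no deterministic sub-loop'' argument in case (b): the analogous almost-everywhere statement would follow from Birkhoff applied to $\Sigma'_k$, but the theorem claims a topological (not merely measure-theoretic) isomorphism, which forces the stronger pointwise statement and hence the graph-theoretic use of irreducibility. Everything else is bookkeeping with already-established properties of $\Phi$ and elementary topology of maps between compact Hausdorff spaces.
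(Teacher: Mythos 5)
Your proof is correct, and on the one substantive point it takes a genuinely different route from the paper. The decomposition into $Y_k=\Phi^{-1}(X_{A_k})$ and the finite case are essentially as in the paper (the paper deduces $C\equiv 1$ on $H_k$ from atomicity and ergodicity of $\Sigma'_k$ rather than from your permutation-matrix characterization, but the two arguments are interchangeable). The difference is in proving injectivity of $\Phi$ on $Y_k$ when $\Card{X_{A_k}}=\infty$. The paper argues measure-theoretically: in a measure-preserving Markov shift any atom has finite inverse orbit, so ergodicity plus $\Card{X_{A_k}}=\infty$ forces $\mu_{A_k,\v_k}$ to be non-atomic; hence $\mu\bigl(\Phi^{-1}(\Phi(x))\bigr)=0$, while a nested intersection of balls containing two points would have positive measure. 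You instead argue combinatorially: irreducibility of $A_k$ prevents any admissible sequence from eventually staying in the deterministic part $D=\{j:C(j)=1\}$, so the radii $r/\prod_{n<N}C(d_n)$ of the sets $\Phi^{-1}([d_0\ldots d_N])$ tend to $0$ and every fiber is a singleton. Your version buys a constructive, pointwise reason for injectivity and avoids the non-atomicity lemma, at the cost of the small graph-theoretic argument about $\sigma$-periodic orbits in $D$; the paper's version is shorter given that it has already established that $\Phi$ is measure-preserving. One cosmetic remark: since the paper does not normalize $|\pi|=1/q$, the bound you want for $d_n\notin D$ is $C(d_n)\geq |\pi|^{-1}>1$ (which suffices, as $\VV$ is discrete), rather than $C(d_n)\geq q$.
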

\begin{proof}  By Lemma~\ref{lem:maMMP}, $\Sigma'$ is a quotient of $\Sigma$, so $\Sigma$ measure-preseving implies $\Sigma'$ measure-preserving.

For $k=1,\ldots,n$, denote $C_k = \Phi^{-1}(X_{A_k})$, $\mu_k = \left.\mu\right|_{C_k}$, $T_k=\left.T\right|_{C_k}$.  The decomposition of $\Sigma'$ induces the following decomposition of $\Sigma$:
\[ \Sigma = \bigsqcup_{k=1}^{n} (C_k,\mu_k,T_k). \]  To complete the proof of the proposition, it suffices to show that $(C_k,\mu_k,T_k) \isom \sq{\Sigma}_k$ for $k=1,\ldots,n$ as topological and measurable dynamical systems, and to classify them as being locally isometries and ergodic Markov in the two cases.  We now handle the two cases separately:  

{\noindent}{\it Case 1: $\Card{X_{A_k}}<\infty$ }\\
If $\Card{X_{A_k}} < \infty$, then the isomorphism $(C_k,\mu_k,T_k) \isom \sq{\Sigma}_k$ follows by definition.  Note that the measure on $\Sigma_k'$ is necessarily atomic; as it is ergodic, it must in fact be the inverse orbit of a single atom.  As $T_{A}$, hence $T_{A_k}$, is measure-preserving, each of the atoms must have equal measure.  It follows that each element $x \in X_{A_k}$ is of the form \[ x = (d_0,d_1,\ldots,d_\ell,d_0,\ldots,d_\ell,d_0,\ldots,d_\ell,\ldots), \] with $A(d_0,d_1)=A(d_1,d_2)=\ldots=A(d_\ell,d_0)=1$.  Then, $\Phi^{-1}(x) = d_0$, where $C(d_0)=1$ (here, $C$ is that from the definition of {\maM}).  

So, $C_k$ must be a collection of $r$-balls with $C(x)=1$ for $x \in C_k$.  Then, for $x,y \in C_k$ with $|x-y| \leq r$ we have $|T(x)-T(y)|=C(x) |x-y| = |x-y|$.  This shows that $\sq{\Sigma}_k$ is locally an isometry, as desired.

{\noindent}{\it Case 2: $\Card{X_{A_k}}=\infty$ }\\
If $\Card{X_{A_k}} = \infty$, then we claim that $\Phi$ induces an isomorphism $(C_k,\mu_k,T_k) \isom \sq{\Sigma}_k$.  In a measure-preserving Markov shift, any atoms must have finite inverse orbit; so $\Sigma'_k$ ergodic and $\Card{X_{A_k}} = \infty$ implies that $\mu_{A_k,\v_k}$ is non-atomic. Recall that $\Phi$ is surjective.  We claim that it is also injective.  For $x \in X$ let $d_n = \pi_H T^n(x)$ for $n=0,1,\ldots$.  Then, \[ \Phi^{-1}(\Phi(x)) = \bigcap_{\ell \geq 0} \Phi^{-1}\left([d_0 \ldots d_\ell] \right). \]  We have from Proposition~\ref{prop:maMPhi} that each of these pre-images is a ball.  Then, $\Phi^{-1}(\Phi(x))$ is the intersection of a nested family of balls.  If the intersection contains more than a single point, then the radii of the balls do not go to $0$, and so the intersection has non-empty interior and thus positive measure.  Now, the measure on $X_{A_k}$ is non-atomic, so $\mu_{A_k,\v_k}(\Phi(x))=0$.  As $\Phi$ is measure-preserving, this implies that $\mu(\Phi^{-1}(\Phi(x)))=0$; by the above considerations this implies that $\Phi^{-1}(\Phi(x))$ contains at most one point.  So, $\Phi$ is injective.

Then, $\Phi$ is a continuous, measure-preserving  bijection.  Observe that $\Phi$ takes closed sets to closed sets by compactness, so its inverse is also continuous. This also implies that $\Phi^{-1}$ is measurable, and then $\Phi$ measure-preserving implies $\Phi^{-1}$ measure-preserving.  So, $\Phi$ is an isomorphism of topological and measurable dynamic systems $(C_k,\mu_k,T_k) \isom \sq{\Sigma}_k$ as desired.  As the later is ergodic Markov, the former is as well.
\end{proof}

\begin{corollary}
Let $X \subseteq K$ be compact-open and $T: X \to X$ a measure-preserving {\maM} transformation.  If $T$ is ergodic then it is either Markov or locally an isometry.  In particular, if it is weakly mixing then it also Markov and so mixing.  So, for a  measure-preserving {\maM} transformation on a compact-open $X$, weakly mixing implies mixing.
\end{corollary}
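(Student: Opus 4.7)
The plan is to derive both parts of the corollary directly from Theorem~\ref{thm:maMMPStruct}. First I would assume $T$ is ergodic and observe that each piece $C_k = \Phi^{-1}(X_{A_k})$ in the decomposition is a union of $r$-balls (the elements of $H_k$ viewed as subsets of $X$), hence has positive Haar measure whenever $H_k \neq \emptyset$; moreover each $C_k$ is $T$-invariant, since $\Phi \circ T = T_A \circ \Phi$ and $X_{A_k}$ is $T_A$-invariant. Ergodicity of $T$ therefore forces exactly one of the $C_k$ to be non-empty, so $n = 1$ and $T \isom \sq{\Sigma}_1$, which by Theorem~\ref{thm:maMMPStruct} is either ergodic Markov or locally an isometry. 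This gives the first assertion.

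For the second assertion, I would assume $T$ is weakly mixing (hence ergodic); by the preceding paragraph it then suffices to exclude the locally isometric case. If $T$ were locally an isometry, so that the scaling function satisfies $C \equiv 1$, then Lemma~\ref{lem:maMScale} shows that for every $r' \in \VV$ with $r' \leq r$, the map $T$ sends each $r'$-ball $B_{r'}(a) \subseteq X$ bijectively onto $B_{r'}(T(a))$. Thus $T$ would induce a permutation of the finitely many $r'$-balls comprising $X$, which is a finite measurable factor of $T$; by ergodicity it must be a single cycle. Since $X$ contains an open ball in $K$ and hence is infinite, I would take $r'$ small enough that the number of $r'$-balls is at least $2$, producing a non-trivial finite cyclic factor with eigenvalues equal to non-trivial roots of unity on the unit circle. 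This contradicts weak mixing, so $T$ must in fact be ergodic Markov.

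Finally, I would upgrade from weak mixing to mixing in the ergodic Markov case. Writing $T \isom T_A$ for the irreducible stochastic matrix $A$, the standard Perron--Frobenius dichotomy says that if $A$ has period $d > 1$, then $H$ decomposes into $d$ cyclic classes which $A$ permutes cyclically, yielding a non-trivial finite cyclic factor of the Markov shift and again contradicting weak mixing. So $A$ is primitive, whence Proposition~\ref{prop:ergDecMarkov} gives that $T$ is mixing. The main obstacle I foresee is the careful exclusion of the locally isometric case: one must use the fact that open balls in a non-archimedean local field are infinite to guarantee that refining $r$ to some sufficiently small $r'$ always produces a genuinely non-trivial finite cyclic factor, which is what rules out weak mixing in that branch of the structure theorem.
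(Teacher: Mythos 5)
Your proof is correct and follows essentially the same route as the paper: apply Theorem~\ref{thm:maMMPStruct}, use ergodicity to force a trivial decomposition, rule out the locally isometric branch under weak mixing, and invoke primitivity of the irreducible transition matrix to upgrade to mixing. The paper states the last two steps without justification, and your refinement-to-$r'$-balls argument and the Perron--Frobenius period dichotomy are exactly the details needed to substantiate them.
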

\begin{proof}
If $T$ is ergodic, then the decomposition in Theorem~\ref{thm:maMMPStruct} must be trivial.  So, $T$ must be either Markov or locally an isometry.  If it is locally an isometry, then it cannot be weakly mixing.  So, weakly mixing implies weakly mixing Markov which in turn implies mixing.
\end{proof}

\begin{corollary}
For a {\maM} transformation, the following properties depend only on the associated transition matrix:
\begin{enumerate}
\item Measure-preserving;
\item Weakly mixing, mixing, exact, Bernoulli.
\end{enumerate}
\end{corollary}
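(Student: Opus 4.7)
The plan is to derive (i) directly from Lemma~\ref{lem:maMMP}(iv) and (ii) from the decomposition in Theorem~\ref{thm:maMMPStruct}. Part (i) is essentially a restatement of Lemma~\ref{lem:maMMP}(iv): $T$ is measure-preserving if and only if each column of $A$ sums to $1$, which is a condition on $A$ alone.

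For (ii), I would first use (i) to reduce to measure-preserving $T$, since each listed property presupposes this. Each of weakly mixing, mixing, exact, and Bernoulli is an isomorphism invariant of measure-preserving systems and implies ergodicity of $\Sigma$. By Theorem~\ref{thm:maMMPStruct}, ergodicity forces the decomposition $\Sigma \isom \bigsqcup_{k=1}^{n} \sq{\Sigma}_k$ to be trivial, i.e., $n=1$; and since $n$ equals the number of strongly connected components of the digraph of $A$ (Proposition~\ref{prop:ergDecMarkov}), the condition $n=1$ depends only on $A$. In the subcase $\Card{X_{A_1}} = \infty$, Theorem~\ref{thm:maMMPStruct} supplies an isomorphism $\Sigma \isom \Sigma'$ of measure-preserving dynamical systems, and $\Sigma' = (X_A, \mu_{A,\v}, T_A)$ is determined up to isomorphism by $A$ alone, since $\v$ becomes uniform on $H$ after normalizing $\mu$ to a probability measure and $\Card{H}$ is just the size of $A$. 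The four properties therefore pass between $\Sigma$ and $\Sigma'$ and are determined by $A$ in this subcase.

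The remaining case, $n=1$ and $\Card{X_{A_1}} < \infty$, is the main obstacle. Here $\sq{\Sigma}_1$ is locally an isometry, and I would need to show that such a system is never weakly mixing, so that none of the four properties holds regardless of the particular $T$. To see this, $T$ cyclically permutes the finite collection of $r$-balls partitioning $X$, and on each such ball $b$ the iterate $T^{\Card{X_{A_1}}}$ is distance-preserving (since $C \equiv 1$ on $X$ in this case). For any $r' \in \VV$ with $r' < r$, this iterate permutes the $\rho(r)/\rho(r') > 1$ sub-$r'$-balls of $b$, so $T$ itself permutes the non-trivial finite partition of $X$ into $r'$-balls. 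The indicator of a single block is then a non-constant function whose $T$-correlations are eventually periodic, contradicting weak mixing. Since the dichotomy $\Card{X_{A_1}}$ finite versus infinite is intrinsic to $A$, combining the cases shows that each property in (ii) is determined by $A$ alone.
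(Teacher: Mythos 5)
Your proof is correct and takes essentially the same route as the paper: part (i) is read off from Lemma~\ref{lem:maMMP}(iv), and part (ii) follows from the same three-way case analysis on the decomposition of Theorem~\ref{thm:maMMPStruct} (non-trivial decomposition, ergodic Markov with the shift determined by $A$, local isometry). The only difference is that you supply an explicit justification --- the permutation of the finite partition into $r'$-balls and the resulting periodic correlations --- for the claim that a local isometry is never weakly mixing, which the paper asserts without proof.
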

\begin{proof}
By Lemma~\ref{lem:maMMP}, the property of being measure-preserving depends only on the associated transition matrix.

Note that the decomposition in Theorem~\ref{thm:maMMPStruct} depends only on the associated transition matrix.  Given an associated transition matrix, we have the following cases:
\begin{enumerate}
\item The decomposition is trivial, and the system is a local isometry.  Then, it is not weakly mixing (or any of the stronger properties listed).
\item The decomposition is trivial, and the system is ergodic Markov.  In this case, the system is determined up to isomorphism by the matrix.
\item The decomposition is not trivial.  In this case, the system is not ergodic and cannot satisfy any of the stronger properties listed. \qedhere
\end{enumerate}
\end{proof}

\section{Polynomial approximation in $\OO$}\label{sec:polApprox}
The above results dealt with $\C^1$ functions, extending to polynomial maps as a special case.  In the next sections we will be interested in finding polynomial maps with specified associated transition matrices.  In preparation for this, we will need some results on the approximation of continuous maps $\OO \to K$.  For the reader's convenience, we will sketch here the definitions and results of \cite{Amice}, slightly simplified for our applications.

Say $X \subseteq \OO$ is compact-open.  Moreover, assume that $X$ is a finite union of $r$-balls for $r\in \VV$.  Then, for  $r' \leq r$ each $r'$-ball contained in $X$ is a union of precisely $q$ balls of radius $|\pi| r'$ contained in $X$.  In the terminology of \cite{Amice}, this makes $X$ a \emph{regular valued compact} (\emph{compact valu\'e r\'egulier} in the original French).

For $k \geq \log_{|\pi|}{r}$, we may define $H_k = X/B_{|\pi|^{k}}(0)$, and a projection map $\pi_k: X \to H_k$.  Then, we say that a sequence $\{ u_k \in X : k \in \NN \}$ is \emph{very well distributed} (\emph{tr\`es bien r\'epartie}) if for each $k \geq \log_{|\pi|} r$, $h \in H_k$, and $m \geq 1$ we have
\[ \Card{\{ i < m \Card{H_k} : u_i \in h \}} = m. \]
That is, the terms of the sequence must be equally distributed among the possible values $\mod{\pp^k}$ for $k \geq \log_{|\pi|} r$.  Note that the condition that the $\{u_k\}$ are very well distributed implies that they are distinct.

Now, given such a sequence $\{u_0,u_1,\ldots\}$, we may define the \emph{{\cip}} for $k \geq 0$:
\[ P_k(x) = (x-u_0)(x-u_1)\cdots(x-u_{k-1})\qquad\text{and}\qquad Q_k(x) = \frac{P_k(x)}{P_k(u_k)}. \]

Then, we may summarize some of the results of \cite[\S II.6.2]{Amice} as follows:
\begin{theorem}[Amice]\label{thm:polApprox1}
Let $X \subseteq \OO$ be compact-open, and let $\{ u_k \}$ be a very well distributed sequence with values in $X$ with $P_k, Q_k$ the {\cip}.   Let $f: X \to K$ be continuous,  and for $k \geq 0$ set
\[ a_k = P_k(u_k) \left( \sum_{j=0}^{k} \frac{f(u_j)}{P'_{k+1}(u_j)} \right). \]

Then:
\begin{enumerate}
\item $|a_k| \to 0$ as $k \to \infty$;
\item $ \sum_{k \geq 0} a_k Q_k(x) \to f(x) $ uniformly on $X$;
\item The $a_k$ are determined by (ii);
\item $ \sup_{x \in X} |f(x)| = \sup_{k \in \NN} |a_k|$.
\end{enumerate}
\end{theorem}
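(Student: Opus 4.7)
The plan is to follow the classical strategy, essentially that of Mahler (recovered here by taking $X = \ZZ_p$ and $u_k = k$, which makes $Q_k(x) = \binom{x}{k}$) as generalized by Amice to this setting.

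First I would identify $a_k$ as, up to normalization, the $k$-th Newton divided difference. Since $P'_{k+1}(u_j) = \prod_{i \leq k,\ i \neq j}(u_j - u_i)$, the formula in the statement reads $a_k = P_k(u_k) \cdot [u_0,\ldots,u_k]\, f$, where $[u_0,\ldots,u_k]\, f$ is the standard divided difference. A routine induction on $N$ (using only $Q_k(u_j)=0$ for $j<k$ and $Q_k(u_k)=1$) shows that the partial sum $S_N(x) = \sum_{k=0}^{N-1} a_k Q_k(x)$ is the unique polynomial of degree $<N$ satisfying $S_N(u_j)=f(u_j)$ for $j<N$. Claim (iii) is then immediate: evaluating any null combination $\sum b_k Q_k \equiv 0$ at $x=u_0, u_1, u_2,\ldots$ in turn forces $b_0=b_1=\cdots=0$.

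The technical heart of the proof is a sup-norm identity for $P_k$:
\[ \sup_{x\in X}|P_k(x)| = |P_k(u_k)|, \qquad \text{hence} \qquad \sup_{x \in X}|Q_k(x)| = 1. \]
To prove this I would write $v(P_k(x)) = \sum_{i<k} v(x-u_i)$ and partition the contributions by the coset of $B_{|\pi|^m}(0)$ in which each $u_i$ lies, for each $m \geq \log_{|\pi|} r$. The very-well-distributed hypothesis exactly controls the count $\#\{i<k : u_i \in h\}$ for each $h \in H_m$: at every level $m$ the points $u_0,\ldots,u_{k-1}$ saturate cosets in a prescribed order, and the cosets not yet fully saturated at the finest relevant level are precisely the ones into which $u_k$ is forced to land. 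This shows $v(P_k(u_k))$ realizes the minimum of $v(P_k(x))$ over $x \in X$. I expect this combinatorial bookkeeping, calibrated exactly to the definition of very well distributed, to be the main obstacle.

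Once the sup-norm identity is in hand, the remaining claims follow essentially formally. For (iv), the ultrametric triangle inequality gives $\sup_x |f(x)| \leq \sup_k |a_k|$ whenever $\sum a_k Q_k$ converges uniformly to $f$; the reverse inequality $|a_k| \leq \sup_x |f(x)|$ comes by induction using the divided-difference formula together with the estimate $|P'_{k+1}(u_j)| \geq |P_k(u_k)|$ (a small variant of the sup-norm identity applied to $P_{k+1}(x)/(x-u_j)$). For (ii), uniform continuity of $f$ on the compact set $X$ together with very-well-distributedness produces, for each $\varepsilon > 0$, some $N_0$ so that every $x \in X$ lies within $\delta_\varepsilon$ of some $u_j$ with $j < N_0$, where $\delta_\varepsilon$ controls the oscillation of $f$ by $\varepsilon$. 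Applying (iv) to the tail $f - S_N$, which vanishes at $u_0,\ldots,u_{N-1}$, bounds $\sup_{x \in X}|f(x) - S_N(x)|$ by this oscillation, hence by $\varepsilon$. Finally (i) follows from (ii) by applying (iv) to $f - S_N$ and observing that its expansion coefficients in the $\{Q_k\}$ basis are precisely $\{a_k\}_{k \geq N}$.
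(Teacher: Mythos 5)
The paper offers no proof of this statement: it is quoted from Amice \cite{Amice} (\S II.6.2), so there is no in-house argument to compare yours against, and your proposal has to be judged on its own. Its formal skeleton is sound. The identification of $a_k$ as $P_k(u_k)$ times the Newton divided difference, the fact that $S_N=\sum_{k<N}a_kQ_k$ is the interpolant of $f$ at $u_0,\ldots,u_{N-1}$, the uniqueness claim (iii), the sup-norm identity $\sup_{x\in X}|P_k(x)|=|P_k(u_k)|$ (your coset bookkeeping is the right one: each $h\in H_m$ contains either $\lfloor k/\Card{H_m}\rfloor$ or $\lfloor k/\Card{H_m}\rfloor+1$ of the points $u_0,\ldots,u_{k-1}$, and the coset containing $u_k$ always has the smaller count), and the unconditional inequality $|a_k|\le\max_{j\le k}|f(u_j)|$ via $|P'_{k+1}(u_j)|\ge|P_k(u_k)|$ are all correct and correctly ordered.

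The gap is in (ii), and it is the heart of the theorem. You bound $\sup_X|f-S_N|$ by ``applying (iv) to $f-S_N$,'' using that $f-S_N$ vanishes on the $\delta$-net $\{u_0,\ldots,u_{N-1}\}$. But the direction of (iv) you need, namely $\sup|g|\le\sup_k|b_k|$, presupposes that the expansion of $g=f-S_N$ converges uniformly to $g$ --- which is exactly statement (ii); the unconditional direction gives only $|a_k|\le\sup|f-S_N|$, the wrong way around. If instead you argue directly that a function vanishing on a $\delta$-net is uniformly small, you must control its oscillation at scale $\delta$, hence the oscillation of $S_N$; but by Theorem~\ref{thm:polApprox2} the Lipschitz constant of $Q_k$ is $\kappa_k\to\infty$, so $S_N$ need not a priori have small oscillation on $\delta$-balls. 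As written, your derivations of (i), (ii), and the $\le$ half of (iv) form a cycle. What is missing is the one genuinely non-formal estimate of Amice's proof: $|a_k|\le\sup\{|f(x)-f(y)|:|x-y|\le|\pi|^m\}$ for all $k\ge\Card{H_m}$, obtained by grouping the sum $\sum_{j\le k}f(u_j)/P'_{k+1}(u_j)$ over the cosets of $H_m$ and exploiting the very-well-distributed structure once more. (An alternative closure: $a_k(f)=a_k(f-P)$ for any polynomial $P$ of degree $<k$, so $|a_k|$ is at most the best sup-norm approximation error of $f$ by such polynomials, and one then proves separately that indicators of balls, hence all locally constant functions, are uniform limits of polynomials.) With either ingredient supplied, (i) follows, (ii) follows since the partial sums then form a uniform Cauchy sequence agreeing with $f$ on the dense set $\{u_j\}$, and (iv) and your (iii) go through as you describe.
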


A very well distributed sequence $\{u_k\}$ is said to be \emph{well ordered} (\emph{bien ordonn\'ee}) if $|u_n-u_m|=|\pi|^{v_q(n-m)}$ for all $n,m \geq 0$ where $v_q(n-m)$ is the exact power of $q$ dividing $n-m \in \ZZ$.  Following our sources, we will call such a sequence \emph{\TBRBO} (\emph{tr\`es bien r\'epartie bien ordonn\'ee}).
This allows us to state results of Helsmoortel and Barsky, characterizing Lipschitz and $\C^1$ functions on $\OO$ in terms of the coefficients in their expansions.  This result may be found in \cite{Barsky}.
\begin{theorem}[Helsmoortel, Barsky]\label{thm:polApprox2}
Let $\{ u_k \}$ be a {\TBRBO} sequence with values in $\OO$, with $P_k, Q_k$ the {\cip}.  Let $f:\OO \to K$ be continuous with 
\[ f(x) = \sum_{k \geq 0} a_k Q_k(x) \] the expansion of $f$ in the sense of Theorem~\ref{thm:polApprox1}.  For $k \geq 1$, define \[ \kappa_k = |\pi|^{-\lfloor \log_{q} k \rfloor}. \] Then:
\begin{enumerate}
\item $f$ is $r$-Lipschitz if and only if $r \leq \kappa_k |a_k|$ for all $k \geq 1$;
\item $f \in \C^1(\OO)$ if and only if $\kappa_k |a_k| \to 0$ as $k \to \infty$.
\end{enumerate}
\end{theorem}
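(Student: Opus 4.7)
The plan is to reduce both assertions to a sharp computation of the Lipschitz semi-norm of each interpolating polynomial $Q_k$, and then transfer the information to $f$ via the uniform expansion $f = \sum_k a_k Q_k$ guaranteed by Theorem~\ref{thm:polApprox1}. Concretely, the key technical lemma I would establish is that for all $k \geq 1$,
\[ L_k \eqdef \sup_{x \neq y \in \OO} \frac{|Q_k(x) - Q_k(y)|}{|x-y|} = \kappa_k, \]
together with $\sup_{x \in \OO}|Q_k(x)| = 1$ (consistent with Theorem~\ref{thm:polApprox1}(iv)).

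To prove this lemma, I would write $Q_k(x) - Q_k(y) = (P_k(x) - P_k(y))/P_k(u_k)$ and use the telescoping identity
\[ P_k(x) - P_k(y) = (x-y) \sum_{j=0}^{k-1} \prod_{i<j}(x - u_i) \prod_{i>j}(y - u_i), \]
estimating each summand via the {\TBRBO} identity $|u_n - u_m| = |\pi|^{v_q(n-m)}$ combined with the direct computation
\[ |P_k(u_k)| = \prod_{j=0}^{k-1} |\pi|^{v_q(k-j)} = |\pi|^{\sum_{i=1}^{k} v_q(i)}. \]
This gives the upper bound $L_k \leq \kappa_k$. The matching lower bound leverages the structural fact that $u_0, \ldots, u_{q^\ell - 1}$ is a complete set of coset representatives for $\OO/\pp^\ell$: one can place specific $x, y$ in cosets selected by inspecting the base-$q$ expansion of $k$, forcing a single summand in the telescoping sum to dominate and realizing the supremum.

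Given the lemma, part (i) follows from the strong triangle inequality: since the series converges uniformly with $|a_k| \to 0$, we obtain $|f(x) - f(y)|/|x-y| \leq \sup_k \kappa_k |a_k|$, and this bound can be attained along suitable test pairs, so $f$ is $r$-Lipschitz iff $\kappa_k |a_k| \leq r$ for all $k \geq 1$ (the direction of the inequality in the stated theorem appears to be a typographical slip for $\kappa_k|a_k| \leq r$). For part (ii), strict differentiability at a point $a$ translates, via term-by-term control using the expansion, into a Cauchy condition for the difference quotients $(Q_k(x) - Q_k(y))/(x-y)$, which by the lemma is governed by $\kappa_k |a_k|$. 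The forward direction combines compactness of $\OO$ (which forces $C^1$ functions to be globally Lipschitz, so $\kappa_k|a_k|$ is bounded) with an extraction argument: if $\kappa_{k_j}|a_{k_j}| \geq \delta$ along a subsequence, one localizes near coset representatives where $Q_{k_j}$ nearly achieves its Lipschitz constant to exhibit difference quotients oscillating by at least $\delta$, contradicting strict differentiability.

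The main obstacle is the sharp identification $L_k = \kappa_k$. The upper bound is a fairly mechanical non-archimedean calculation, but the matching lower bound demands an explicit construction of near-optimal pairs $(x,y)$; this requires careful bookkeeping of the interplay between the {\TBRBO} enumeration of the $u_j$ and the base-$q$ digits of the index $k$, and essentially embodies the combinatorial heart of the Helsmoortel--Barsky theorem. Once the lemma is in hand, both (i) and (ii) reduce to standard non-archimedean transfer arguments from $\{Q_k\}$ to $f$.
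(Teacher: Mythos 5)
The paper does not actually prove this theorem: it is imported from the literature (the text points to \cite{Barsky}, following Amice's framework), so there is no internal argument to compare yours against. Two things in your plan are right and worth affirming: the inequality in (i) is indeed a typographical slip for $\kappa_k|a_k|\le r$ (this is confirmed by how the paper uses the result in Lemma~\ref{lem:binomBounds}(ii) and in the proof of Theorem~\ref{thm:polyRepMaM}), and your key lemma that $\sup_{x\ne y}|Q_k(x)-Q_k(y)|/|x-y|=\kappa_k$ with $\sup_{\OO}|Q_k|=1$ is a true and relevant statement; the paper's own proof of Lemma~\ref{lem:binomBounds}(iii) carries out essentially your telescoping-plus-counting computation in the case $k=q^\ell$.

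The genuine gap is in the ``only if'' directions, which are the substantive content of the theorem. Knowing the exact Lipschitz seminorm of each individual $Q_k$ does not yield $\sup_{x\neq y}|f(x)-f(y)|/|x-y|=\sup_k \kappa_k|a_k|$: at a pair $(x,y)$ where $Q_{k_0}$ attains its constant, the other terms $a_k\bigl(Q_k(x)-Q_k(y)\bigr)/(x-y)$ need not be small, and when the maximum of $\kappa_k|a_k|$ is attained at several indices the ultrametric equality $\left|\sum(\cdots)\right|=\max\left|\cdots\right|$ can fail by cancellation, so ``attained along suitable test pairs'' is not a proof. What is actually needed is an orthogonality statement for the normalized difference quotients $\kappa_k^{-1}\bigl(Q_k(x)-Q_k(y)\bigr)/(x-y)$ in the sup-norm sense, or equivalently a coefficient-recovery estimate: starting from $a_k=P_k(u_k)\sum_{j\le k}f(u_j)/P'_{k+1}(u_j)$ and the identity $\sum_{j\le k}1/P'_{k+1}(u_j)=0$ (for $k\ge1$), rewrite $a_k$ as a combination of differences $f(u_j)-f(u_0)$ and prove $|a_k|\le \kappa_k^{-1}\cdot\sup_{x\ne y}|f(x)-f(y)|/|x-y|$ by the same $v_q$ bookkeeping; in the Mahler case this is the classical trick $a_k=\Delta^{k-p^m}\bigl(\Delta^{p^m}f\bigr)(0)$ with $\bigl|\binom{p^m}{j}\bigr|\le p^{-m}/|j|$. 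The same issue affects your converse in (ii): exhibiting ``difference quotients oscillating by at least $\delta$'' requires ruling out cancellation among the tied terms, not just the individual sharpness of each $Q_{k_j}$. This decoupling step is the real heart of the Helsmoortel--Barsky theorem and is absent from your plan; the forward (upper-bound) directions you describe are fine.
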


\begin{example}\label{ex:TBRBO}
Note that $\{0, 1, 2, \ldots \} \subseteq \ZZ_p$ satisfies the conditions for being a very well distributed sequence, and is in fact trivially {\TBRBO}  Then,
\[ Q_k(x) = \frac{x(x-1)\cdots(x-k+1)}{k\cdot (k-1) \cdots \cdot 1} = {x \choose k}. \]  So, in this case the above reduces to the Mahler expansion.

More generally:
Let $a_0,\ldots,a_{q-1}$ be a complete set of coset representatives for $\OO/\pp$.  For $k \in \NN$, we will define $u_k$ in terms of the base-$q$ expansion of $k$:
\[ k = \sum_{i=0}^{\ell} k_i q^i \longmapsto \sum_{i=0}^{\ell} a_{k_i} \pi^i = u_k. \] Then, say we have $n,m \in \NN$ with $n = \sum_{i \geq 0} n_i q^i$ and $m = \sum_{i \geq 0} m_i q^i$.  Let $\ell = v_q(i-j) = \min\{ i : n_i \neq m_i \}$.  Then, \[ |u_n - u_m| = |\pi|^\ell = |\pi|^{v_q(n-m)}, \] and $\{u_k\}$ is {\TBRBO}  In particular, this implies that there is always a {\TBRBO} sequence for $\OO$, and {\cip} such that the results cited in this section hold.
\end{example}

Now, we establish a lemma that will be of particular interest to us:
\begin{lemma}\label{lem:binomBounds}
Let $\{u_n\}$ be a {\TBRBO} sequence in $\OO$ with {\cip} $P_k, Q_k$   Then, for $k \in \NN$:
\begin{enumerate}
\item $Q_k(\OO) \subseteq \OO$;
\item $Q_k$ is $\kappa_k$-Lipschitz, with $\kappa_k$ as in Theorem~\ref{thm:polApprox2};
\item If $k = q^\ell$ for some $\ell \geq 0$, then \[ |Q_k(x)-Q_k(y)| = \kappa_k |x-y| \text{ for all $x,y \in \OO$ with $|x-y| \leq 1/\kappa_k$}. \]
\end{enumerate}
\end{lemma}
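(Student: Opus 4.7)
My plan is to dispatch (i) and (ii) using the approximation theorems already cited, applied with $Q_k$ itself as the test function; then to prove (iii) by expanding $P_k(x) - P_k(y)$ directly via a telescoping identity and checking that a single summand strictly dominates in the non-archimedean sense.

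For (i) and (ii), I would observe that $Q_k$, viewed as a continuous function $\OO \to K$, has expansion coefficients $a_j = \delta_{jk}$ by the uniqueness clause in Theorem~\ref{thm:polApprox1}(iii). Then Theorem~\ref{thm:polApprox1}(iv) immediately yields $\sup_{x \in \OO}|Q_k(x)| = 1$, which is (i); and the Lipschitz characterization in Theorem~\ref{thm:polApprox2}(i) (identifying $\sup_{k\geq 1} \kappa_k |a_k|$ as the optimal Lipschitz constant) gives that $Q_k$ is $\kappa_k$-Lipschitz, which is (ii).

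For (iii), set $k = q^\ell$ and fix $x, y \in \OO$ with $|x-y| \leq |\pi|^\ell = 1/\kappa_k$. The very-well-distributed property makes $\{u_0, \ldots, u_{q^\ell - 1}\}$ a complete set of coset representatives for $\OO/\pp^\ell$, so there is a unique $j$ with $x \equiv y \equiv u_j \pmod{\pp^\ell}$. I would then apply the telescoping identity
\[ P_k(x) - P_k(y) = (x-y)\sum_{i=0}^{k-1}\prod_{i' < i}(x - u_{i'})\prod_{i' > i}(y - u_{i'}) \]
and show that only the $i = j$ summand survives in absolute value. The {\TBRBO} property gives $|u_j - u_{i'}| = |\pi|^{v_q(j - i')} \geq |\pi|^{\ell - 1}$ for $i' \neq j$ in $\{0, \ldots, q^\ell - 1\}$; this strictly exceeds $|x - u_j|, |y - u_j| \leq |\pi|^\ell$, so the strong triangle inequality gives $|x - u_{i'}| = |y - u_{i'}| = |u_j - u_{i'}|$ for $i' \neq j$. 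Hence the $i = j$ summand has absolute value $\prod_{i' \neq j}|u_j - u_{i'}|$. For $i \neq j$, the corresponding summand now includes the factor $(x - u_j)$ or $(y - u_j)$ (of size $\leq |\pi|^\ell$) in place of the missing factor $|u_j - u_i| \geq |\pi|^{\ell - 1}$, making it at most $|\pi|$ times the $i = j$ term and hence strictly dominated. The sum therefore collapses to $|x-y|\cdot\prod_{i' \neq j}|u_j - u_{i'}|$. Finally, using the symmetry $v_q(m) = v_q(q^\ell - m)$ for $1 \leq m < q^\ell$ (which holds since both sides equal $v_q(m) < \ell$), one computes $|P_k(u_k)| = |\pi|^\ell \prod_{i' \neq j}|u_j - u_{i'}|$; dividing gives $|Q_k(x) - Q_k(y)| = \kappa_k |x - y|$.

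The main obstacle will be the bookkeeping in (iii): verifying that the ratio $|P_k(u_k)|/\prod_{i' \neq j}|u_j - u_{i'}|$ is exactly $|\pi|^\ell$, independent of $j$. This reduces to observing that the multiset $\{v_q(j - i') : i' \in \{0, \ldots, q^\ell - 1\} \setminus \{j\}\}$ coincides with $\{v_q(m) : 1 \leq m \leq q^\ell - 1\}$ via the involution $m \mapsto q^\ell - m$, which preserves $v_q$ on this range. Once this is in hand, the rest of the argument is routine non-archimedean estimation.
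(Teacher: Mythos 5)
Your proof is correct, and for the main part (iii) it takes a genuinely different, though closely parallel, route from the paper's. Parts (i) and (ii) are handled exactly as in the paper: read off the expansion coefficients $a_j = \delta_{jk}$ of $Q_k$ via the uniqueness clause and cite Theorem~\ref{thm:polApprox1}(iv) and Theorem~\ref{thm:polApprox2}(i). For (iii), the paper expands $Q_k(x)-Q_k(y)$ around the nearby node $u_m$ as $(x-y)S_1(u_m)+\sum_{j\ge 2}S_j(u_m)\left[(x-u_m)^j-(y-u_m)^j\right]$, where the $S_j$ are elementary-symmetric-type sums over $j$-element sets of omitted factors, and then proves $|S_1(u_m)|=\kappa_k$ while $|S_j(u_m)|<\kappa_k^j$, so the linear term dominates. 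You instead use the telescoping factorization of $P_k(x)-P_k(y)$ and isolate the single dominant summand (the one omitting the index $j$ of the nearby node) directly; each other summand trades the factor $|u_j-u_i|\ge|\pi|^{\ell-1}$ for a factor of size at most $|\pi|^{\ell}$ and is therefore strictly dominated, which is a cleaner piece of bookkeeping than the paper's two-variable regrouping. The combinatorial core is the same in both arguments --- the multiset $\{v_q(j-i') : i'\ne j,\ 0\le i'<q^\ell\}$ is independent of $j$ and differs from the multiset governing $|P_k(u_k)|$ only in the single value $v_q(q^\ell)=\ell$ --- but you make this explicit via the $v_q$-preserving involution $m\mapsto q^\ell-m$ on $\{1,\dots,q^\ell-1\}$, whereas the paper derives it from the observation that the shifted sequences $\{u_m-u_n\}$ and $\{u_k-u_n\}$ are again very well distributed. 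Your version is somewhat more elementary and self-contained; the paper's version packages the higher-order terms in a form that it reuses later (e.g.\ in the examples of Section~\ref{sec:polAlmostBern}). Both arguments are complete.
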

\begin{proof}
Claim (i) follows by applying Theorem~\ref{thm:polApprox1}(iv) with $f = Q_k$ (so that $a_i = 1$ for $i=k$ and $0$ otherwise).  Claim (ii) follows similarly from Theorem~\ref{thm:polApprox2}.

Assume $k = q^\ell$.  Fix $x, y \in \OO$ with $|x-y| \leq 1/\kappa_k$; as the $\{u_n\}$ are very well distributed there is some $m \in \{0,\ldots,k-1\}$ such that $|x-u_m| \leq 1/\kappa_k$ (hence also $|y-u_m| \leq 1/\kappa_k$).

Define a polynomial \[ S_j(z) = \sum_{0 \leq i_1 < i_2 < \cdots < i_j < k} \frac{(y-u_0) \cdots \widehat{(y-u_{i_1})} \cdots \widehat{(y-u_{i_j})} \cdots (y-u_{k-1})}{(u_k-u_{k-1})\cdots(u_k-u_0)}. \]  Then, we may observe that 
\[ Q_k(x') - Q_k(y') = \frac{(x'-u_{k-1})\cdots(x'-u_0) - (y'-u_{k-1})\cdots(y'-u_0)}{(u_k-u_{k-1})\cdots(u_k-u_0)}   = \sum_{j=1}^k (x'-y')^j S_j(y') \] for any $x',y' \in \OO$.  Then,
\begin{align*}
Q_k(x) - Q_k(y) &= \left(Q_k(x) - Q_k(u_m)\right) - \left(Q_k(y)-Q_k(u_m)\right) \\
&= (x-y) S_1(u_m) + \sum_{j=2}^{k} S_j(u_m)\left( (x-u_m)^j - (y-u_m)^j \right).\end{align*}

We will prove the following two statements, which together with the strong triangle inequality and the  previous expression imply our desired result:
\begin{enumerate}
\item $\left|S_1(u_m)\right| = \kappa_k$;
\item $\left|S_j(u_m)\right| < \kappa_k^j$ for $1 < j \leq k$.
\end{enumerate}
That this suffices is clear, for the $j=1$ term will dominate in valuation.

Observe that
\[ S_1(u_m) = \frac{(y-u_0) \cdots \oh{(y-u_m)} \cdots (y-u_{k-1})}{(u_k-u_0) \cdots (u_k-u_{k-1})} \] and that
\begin{align*} S_j(u_m) &= \sum_{{0 \leq i_1 < i_2 < \cdots < i_j < k}\atop{m \in \{i_1,\ldots,i_j\}}}  \frac{(u_m-u_0) \cdots \widehat{(u_m-u_{i_1})} \cdots \widehat{(u_m-u_{i_j})} \cdots (u_m-u_{k-1})}{(u_k-u_{k-1})\cdots(u_k-u_0)}.
\end{align*}

Suppose $\{v_n\}$ is a very well distributed sequence.  Then, it is easy to check that $\{v_0,\ldots,v_{k-1}\}$ must contain precisely $q^{\ell'}$ elements bounded by $\pi^{\ell'}$ for each $\ell' \leq \ell$.  Now, observe that both $\{u_m-u_n: n \in \NN\}$ and $\{u_k-u_n : n \in \NN\}$ are very well distributed. Let $m'$ be the unique index in $\{0,\ldots,k-1\}$ such that $|u_k-u_{m'}| \leq 1/\kappa_k$; the very well distributed property of $\{u_n\}$ implies that this is in fact an equality.  Our previous count implies that we must have \[ \left| \frac{(y-u_0) \cdots \oh{(y-u_m)} \cdots (y-u_{k-1})}{(u_k-u_0) \cdots \oh{(u_k-u_{m'})} \cdots (u_k - u_{k-1})} \right| = 1. \]  So, \[ \left| S_1(u_m) \right| = \left| \frac{1}{u_k-u_{m'}} \right| = \kappa_k. \]  

Now,
\[ \left| \frac{S_j(u_m)}{S_i(u_m)} \right| = \left|\sum_{{0 \leq i_1 < i_2 < \cdots < i_{j-1} < k}\atop{m \notin \{i_1,\ldots,i_{j_1}\}}} \frac{1}{(u_m-u_{i_1})\cdots(u_m-u_{i_{j-1}})}\right| < \kappa_k^{j-1}, \] for $|u_m-u_{i_1}|,\ldots,|u_m-u_{i_{j-1}}| > 1/\kappa_k$ as $\{u_n\}$ is very well distributed (and so the first $k$ elements must be in disjoint $1/\kappa_k$-balls).  This completes our proof.
\end{proof}


\section{Polynomial maps on $\OO$ realizing {\maM} transformations}\label{sec:polMaM}
Sections~\ref{sec:mpC1} and \ref{sec:structMaM} characterize measure-preserving polynomial transformations on $\OO$ in terms of {\maM} transformations.  However, we have shown the existence of only a handful of such maps.  In this section, we will show that in fact the polynomials, in a sense, provide a representative class among the measure-preserving {\maM} maps.

We begin with a lemma giving sufficient conditions for two maps to have the same associated transition matrices.
\begin{lemma}\label{lem:matrixApprox}
Suppose $T: \OO \to \OO$ is {\maM} for $r \in \VV$; set $H = \OO/B_r(0)$, let $C: H \to \RR_{\geq 1}$ be the scaling function for $T$, and let $A:H^2 \to \RR_{\geq 0}$ be the associated transition matrix for $T$.  Suppose in addition that $S: \OO \to \OO$ is a transformation such that the difference $R = T - S$ satisfies
\begin{enumerate}
\item $|R(x) - R(y)| < C(x) |x-y|$ whenever $0 < |x-y| \leq r$;
\item $|R(x)| \leq r C(x)$ for all $x$.
\end{enumerate}
Then, $S$ is {\maM} for $r \in \VV$, with scaling function $C$ and associated transition matrix $A$.
\end{lemma}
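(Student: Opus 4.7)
The plan has two parts: show $S$ is locally scaling with the right scaling function, and then show it induces the same transition matrix.

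First I would verify that $S$ is locally scaling for $r$ with scaling function $C$. Fix $x,y \in \OO$ with $0 < |x-y| \leq r$. Write $S = T - R$, so
\[ S(x) - S(y) = \bigl(T(x) - T(y)\bigr) - \bigl(R(x) - R(y)\bigr). \]
Since $T$ is {\maM} with scaling function $C$, $|T(x) - T(y)| = C(x)|x-y|$, while hypothesis (i) gives $|R(x) - R(y)| < C(x)|x-y|$. By the strong triangle inequality (with strictly unequal valuations), $|S(x) - S(y)| = C(x)|x-y|$. Since $\OO$ is already a finite union of $r$-balls and $S$ maps $\OO$ into $\OO$ by assumption, this shows $S$ is {\maM} for $r$ with scaling function $C$.

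Next I would compare the two transition matrices. Denoting by $A_S$ the transition matrix of $S$, the definition gives $A_S(i,j) = \rho(1/C(i)) = A(i,j)$ whenever $i \cap S^{-1}(j) \neq \emptyset$, and $0$ otherwise; and similarly for $A$. So it suffices to show the zero patterns agree, i.e.\ that for every $i,j \in H$,
\[ i \cap T^{-1}(j) \neq \emptyset \iff i \cap S^{-1}(j) \neq \emptyset. \]
Pick $a \in i$. By Lemma~\ref{lem:maMScale} applied to $T$ and to $S$, we have $T(i) = B_{r C(a)}(T(a))$ and $S(i) = B_{r C(a)}(S(a))$, and both restrictions are bijections. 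Since $j$ is an $r$-ball and $T(i)$, $S(i)$ are $rC(a)$-balls (with $C(a) \geq 1$), the non-emptiness of $i \cap T^{-1}(j)$ is equivalent to $j \subseteq T(i)$, and likewise for $S$.

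The key step is the remaining observation, which uses hypothesis (ii): since $|R(a)| = |T(a) - S(a)| \leq r C(a)$, the points $T(a)$ and $S(a)$ lie in the same ball of radius $r C(a)$, so
\[ B_{r C(a)}(T(a)) = B_{r C(a)}(S(a)), \]
i.e.\ $T(i) = S(i)$. Thus $j \subseteq T(i)$ iff $j \subseteq S(i)$, and the transition matrices agree. There is no real obstacle here; the only thing to be careful about is to invoke the strong triangle inequality in its strict form in the first step, and to remember that (ii) is exactly what is needed to identify the images of cosets under the two maps.
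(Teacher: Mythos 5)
Your proposal is correct and follows essentially the same route as the paper: the strict strong triangle inequality gives that $S$ is {\maM} for $r$ with the same scaling function, and hypothesis (ii) together with Lemma~\ref{lem:maMScale} shows $T(i)=B_{rC(a)}(T(a))=B_{rC(a)}(S(a))=S(i)$ for each coset $i$, so the zero patterns (hence the transition matrices) coincide. The only cosmetic difference is that you phrase the last step via the ultrametric containment $j\subseteq T(i)$, while the paper simply observes that $T(B_r(x))=S(B_r(x))$ suffices; these are the same argument.
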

\begin{proof}
For $0 < |x-y| \leq r$ we have
\[ |S(x)-S(y)|=|T(x)-T(y) + R(x) - R(y)|=|T(x)-T(y)| = C(x) |x-y| \]  by the strong triangle inequality.  Indeed, $|T(x)-T(y)| = C(x) |x-y|$, which by (i) is strictly greater than $|R(x)-R(y)|$.  So, $S$ is {\maM} for $r \in \VV$, with scaling function $C$.

Now, it remains to verify that $ i \cap T^{-1}(j) = \emptyset \Leftrightarrow i \cap S^{-1}(j) = \emptyset $ for $i,j \in H$.  For this, it suffices to show that $T(B_r(x))=S(B_r(x))$ for all $x \in \OO$.  Indeed, applying Lemma~\ref{lem:maMScale} and (ii) yields
\[ T(B_r(x)) = B_{r C(x)}(T(x)) = B_{r C(x)}(S(x)) = S(B_r(x)). \qedhere \]
\end{proof}

For $S \subseteq K$ we say that $T: S \to K$ is \emph{affine} if it is given by $x \mapsto a x + b$ for some constants $a,b \in K$.  We say that $T: \OO \to K$ is \emph{locally affine} if for each $x \in \OO$ there exists a $r \in \VV$ such that $\left. T \right|_{B_r(x)}$ is affine.

%
%
%

\begin{theorem}\label{thm:polyRepMaM}
Let $r \in \VV$ and $H=\OO/B_r(0)$.  Let $A$ be a stochastic matrix on $H$. Then, let \[ \mathcal{T}_A = \{ T\text{ {\maM} for $r$} : A\text{ is the associated transition matrix for $T$} \}. \]  If $\mathcal{T}_A$ is non-empty then:
\begin{enumerate}
\item $\mathcal{T}_A$ contains a locally affine transformation;
\item $\mathcal{T}_A$ contains infinitely many polynomials.
\end{enumerate}
\end{theorem}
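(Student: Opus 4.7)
The plan is to construct a locally affine witness for (i) directly from the scaling data of an arbitrary $T \in \mathcal{T}_A$, and then approximate that witness by polynomials using Amice's theory together with Lemmas~\ref{lem:binomBounds} and~\ref{lem:matrixApprox}; a final perturbation argument will then produce infinitely many polynomials.

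Fix any $T \in \mathcal{T}_A$, with scaling function $C : H \to \RR_{\geq 1}$. Because each $C(i)$ arises as a ratio of absolute values of nonzero elements of $K$, it lies in $\VV$, so we may choose $c_i \in K^\times$ with $|c_i| = C(i)$. Fix a representative $a_i \in i$ for each $i \in H$ and set $b_i = T(a_i)$. Define $S : \OO \to \OO$ piecewise by
\[ S(x) = c_i(x-a_i) + b_i \qquad \text{for } x \in i. \]
As $x$ ranges over $B_r(a_i)$, $c_i(x-a_i)$ ranges over $B_{rC(i)}(0)$, so $S(B_r(a_i)) = B_{rC(i)}(b_i)$, which by Lemma~\ref{lem:maMScale} equals $T(B_r(a_i)) \subseteq \OO$. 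Hence $S$ is well-defined, locally affine, and for $x, y$ in a common coset $i$ satisfies $|S(x)-S(y)| = C(i)|x-y|$, so $S$ is {\maM} for $r$ with scaling function $C$. Since $S$ and $T$ send each coset $i$ to the same $B_{rC(i)}$-ball, they hit the same cosets in $H$ and share the transition matrix $A$. This gives (i).

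For (ii), I approximate $S$ by polynomials via Amice's theory. Because $S$ is locally affine, $S \in \C^1(\OO)$. Fix a \TBRBO\ sequence $\{u_n\}$ in $\OO$ (Example~\ref{ex:TBRBO}) with {\cip} $Q_k$. By Theorem~\ref{thm:polApprox1}(ii) expand $S(x) = \sum_{k \geq 0} a_k Q_k(x)$ uniformly on $\OO$, and by Theorem~\ref{thm:polApprox2}(ii), $\kappa_k |a_k| \to 0$. Let $F_n = \sum_{k=0}^{n} a_k Q_k$ and $R_n = S - F_n$. By Lemma~\ref{lem:binomBounds}(i),(ii) and the strong triangle inequality,
\[ |R_n(x)| \leq \max_{k > n} |a_k|, \qquad |R_n(x) - R_n(y)| \leq \Bigl(\max_{k > n} \kappa_k |a_k|\Bigr) |x-y|, \]
both of which tend to $0$. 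Choosing $n$ so the first bound is at most $r$ and the second coefficient is strictly less than $1 \leq C(x)$, Lemma~\ref{lem:matrixApprox} (applied with its $T$ taken to be our $S$) shows $F_n \in \mathcal{T}_A$.

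Given one polynomial $F \in \mathcal{T}_A$, set $G_N(x) = F(x) + \pi^N x$ for integers $N \geq 1$. With $R(x) = \pi^N x$, $|R(x)| \leq |\pi|^N$ and $|R(x)-R(y)| = |\pi|^N |x-y|$ on $\OO$, so condition (i) of Lemma~\ref{lem:matrixApprox} holds because $|\pi|^N < 1 \leq C(x)$, and (ii) holds for all $N$ with $|\pi|^N \leq r$. Thus $G_N \in \mathcal{T}_A$ for all sufficiently large $N$, and the $G_N$ are pairwise distinct (differing in the coefficient of $x$). I expect the main obstacle to be the polynomial approximation step: one must control the remainder simultaneously in sup-norm (for condition (ii) of Lemma~\ref{lem:matrixApprox}) and in a Lipschitz sense (for the strict inequality in condition (i)), and this is precisely what the Helsmoortel--Barsky characterization together with the sharp Lipschitz bound of Lemma~\ref{lem:binomBounds}(ii) provides.
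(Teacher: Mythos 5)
Your proof is correct and follows essentially the same route as the paper: a locally affine witness built directly from the scaling function for (i), then truncation of the Amice expansion controlled by Theorem~\ref{thm:polApprox2} and Lemma~\ref{lem:binomBounds} and fed into Lemma~\ref{lem:matrixApprox} for (ii). The only departure is the last step, where you produce infinitely many polynomials by adding $\pi^N x$ for large $N$ rather than the paper's tail perturbations $\sum_{k>N} b_k Q_k$; both work equally well.
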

\begin{proof}\mbox{}\\
{\noindent}{\bf(i): }\\
Suppose $T \in \mathcal{T}_A$.  Let $C: H \to \RR_{\geq 1}$ be the scaling function in the definition of {\maM} and observe that its image is contained in $\VV$.  Let $S: H \to K$ be any function satisfying $|S(h)| = C(h)$ for all $h \in H$.  Then, for each $h \in H$ we observe that $T(h) = \{ T(x): x \in h \}$ and $S(h) h = \{ S(h) x : x \in h \}$ are both balls of radius $C(h) r$.  So, there exists a function (indeed, many of functions) $M: H \to K$ such that $T(h) = S(h) h + M(h)$.  Regarding $S$ and $M$ as functions with domain $\OO$ via the quotient map $\OO \to H$, we may define a locally affine transformation $T_{S,M}: \OO\to \OO$ by the formula $T_{S,M}(x) = S(x) x + M(x)$.  

Since $S, M$ are constant on elements of $H$ and $|S(h)| = C(h)$, it follows at once that $T_{S,M}$ is {\maM} for $r$ with scaling function $C$.  By construction, $T_{S,M}(h) = T(h)$ (which is also why $T_{S,M} \OO \subset\OO$), so that $T_{S,M} \in \mathcal{T}_A$ by the definition of the associated transition matrix.

\medskip
{\noindent}{\bf (ii): }\\
Let $T \in \mathcal{T}_A$.  By (i), we may assume that $T$ is locally affine and hence strictly differentiable.  Let $\{u_k\}$ be a {\TBRBO} sequence in $\OO$ (which must exist by Example~\ref{ex:TBRBO}).  Let
\[ T = \sum_{k \geq 0} a_k Q_k. \]  
be the decomposition of $T$ in the sense of Theorem~\ref{thm:polApprox1}

Take $\alpha \in \VV$ with $\alpha < 1$.  By Theorem~\ref{thm:polApprox2}, $\kappa_k |a_k| \to 0$ as $k \to \infty$, so there exists an $N \in \NN$ such that for $k > N$ we have $\kappa_k |a_k| \leq \alpha < 1$.  Moreover, take $N$ such that $N > \Card{H}$.

Let \[ f(x) = \sum_{k=0}^{N} a_k Q_k(x).\]  Note that $f$ is a polynomial.  Set \[ R(x) = T(x)-f(x) = \sum_{k > N} a_k Q_k(x). \]  Our bound on $\kappa_k |a_k|$ along with the choice $N > \Card{H} = q^{\log_{|\pi|} r}$ implies that $|a_k| < 1/\kappa_N \leq  r$ for $k > N$; so Theorem~\ref{thm:polApprox1}(iv) implies that $|R(x)| \leq r$ for all $x \in \OO$.  Lemma~\ref{lem:binomBounds} implies that $R$ is $\alpha$-Lipschitz, so that $|R(x) - R(y)| \leq \alpha |x-y|$ for $x,y \in \OO$.  Noting that $\alpha < 1 \leq C(x)$ for all $x \in \OO$, we observe that we may apply Lemma~\ref{lem:matrixApprox} to conclude that $f \in \mathcal{T}_A$.

Note that if 
\[ g(x) = \sum_{k > N} b_k Q_k(x) \] is a polynomial such that $\kappa_k |b_k| \leq \alpha$, then the above argument also shows that $f + g \in \mathcal{T}_A$.  So, there are indeed infinitely many polynomials in $\mathcal{T}_A$.
\end{proof}

In particular, Theorem~\ref{thm:polyRepMaM} shows the existence of measure-preserving mixing transformations on the $p$-adics given by polynomial maps.  We can also use this method to compute explicit examples of such maps, but it is not particularly enlightening to do so.  

\section{Polynomial Bernoulli maps on $\OO$}\label{sec:polBern}
The construction of the preceding section gives infinite classes of measure-preserving polynomials with different kinds of measurable dynamics.  Among these maps are Markov mixing maps.  We will now study the class of such polynomials whose associated transition matrix has all entries equal, in which case the Markov transformation is in fact Bernoulli.  The main upshot of this study is a class of explicitly given and relatively simple measure-preserving Bernoulli polynomial maps.

\begin{defn}
We say that a measure-preserving {\maM} map $T: \OO \to \OO$ is \emph{isometrically Bernoulli} for $r \in \VV$ if it is {\maM} for $r \in \VV$ and all entries of the associated transition matrix are equal.
\end{defn}

Let $V = \OO/\pp^\ell \isom \FF_q^\ell$ and define
\[ B_V = \left(\prod_{i \geq 0} V, \mu_V, T_V \right) \] where $\mu_V$ is the product probability measure, and $T_V$ the left-shift.  We may let $d'_V$ be the quotient metric on $V$. Then, we may define a metric $d_V$ on $\prod_{i \geq 0} V$ by
\[ d_V\left( (a_0,a_1,a_2,\ldots), (b_0,b_1,b_2,\ldots) \right) = |\pi|^{-\ell (m-1)} d'_V(a_{m},b_{m}) \text{ where $m=\min\{i: a_i \neq b_i\}$}. \]
We give two justifications for this metric:
\begin{enumerate}
\item View elements of $V$ as $\ell$-tuples under the isomorphism $\FF_q^\ell \isom V$ corresponding to $\pi$-adic expansion (i.e., the isomorphism induced by the map shown in (ii)).  Then, expanding each element in the product to a $\ell$-tuple, $d_V$ is just the dictionary metric (with base $|\pi|$).
\item For each $a \in V$ we may let $\ol{a} \in \OO$ be a coset representative for the quotient.  Then, the map
\[ (a_0,a_1,\ldots) \longmapsto \sum_{i \geq 0} \ol{a_i} \pi^{\ell i} \]
gives a bijection $\prod_{i \geq 0} V \to \OO$.  This metric is the unique metric making this map an isometry.
\end{enumerate}

Now, the term isometrically Bernoulli is partially motivated by the following:
\begin{lemma}\label{lem:isoBernCat}
Let $T: \OO \to \OO$ be a transformation and let $\ell \geq 1$.  Then, the following are equivalent:
\begin{enumerate}
\item $T$ is isometrically Bernoulli for $r = |\pi|^\ell$;
\item For all $x,y \in \OO$ satisfying $|x-y| \leq |\pi|^\ell$, \[ |T(x) - T(y)| = |\pi|^{-\ell} |x-y|. \]
\item Let $V =\OO/\pp^\ell$. There exists an invertible isometry $\Phi: \OO \to \prod_{i \geq 0} V$ such that $\Phi \circ T = T_V \circ \Phi$; that is, $(\OO,\mu,T)$ is metrically isomorphic to $B_V$.
\end{enumerate}
\end{lemma}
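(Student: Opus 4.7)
The plan is to handle (i) $\Leftrightarrow$ (ii) directly from the definition of the associated transition matrix, and to establish (ii) $\Leftrightarrow$ (iii) by constructing an explicit conjugacy between $T$ and the shift $T_V$ and tracking how $|x - y|$ evolves under iteration of $T$.

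For (i) $\Rightarrow$ (ii): a stochastic matrix on $H = \OO/\pp^\ell$ with all entries equal must have each entry equal to $1/|H| = \rho(|\pi|^\ell)$. The nonzero entries of the associated transition matrix take the form $\rho(1/C(i))$, so this forces $C(i) \equiv |\pi|^{-\ell}$ and $i \cap T^{-1}(j) \neq \emptyset$ for every $i,j$. The {\maM} scaling condition then reads exactly as (ii). Conversely, (ii) makes $T$ {\maM} for $r = |\pi|^\ell$ with scaling function identically $|\pi|^{-\ell}$, and by Lemma~\ref{lem:maMScale} the restriction $T|_{B_{|\pi|^\ell}(a)}$ is a bijection onto $B_1(T(a)) = \OO$; so no entry of $A$ is zero, and each equals $1/|H|$ by the row-sum constraint.

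For (iii) $\Rightarrow$ (ii): if $|x - y| \leq |\pi|^\ell$, then $\Phi(x)$ and $\Phi(y)$ agree in coordinate zero, so shifting by $T_V$ decreases the first index of disagreement by one; the product metric on $\prod_{i \geq 0} V$ is set up to multiply distances by $|\pi|^{-\ell}$ under this operation. Pulling back through the isometry $\Phi$ then yields $|T(x) - T(y)| = |\pi|^{-\ell} |x - y|$.

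The main step, and what I expect to be the primary obstacle, is (ii) $\Rightarrow$ (iii). I would define $\Phi : \OO \to \prod_{n \geq 0} V$ by $\Phi(x)_n = T^n(x) \bmod \pp^\ell$; the relation $\Phi \circ T = T_V \circ \Phi$ is then immediate. Since (ii) implies (i), Proposition~\ref{prop:maMPhi} applies and this $\Phi$ agrees with the map constructed there, so is continuous, measure-preserving, and surjective, with target measure the Bernoulli product. It remains to show $\Phi$ is an isometry, which will then give injectivity and hence bijectivity (with continuous inverse, since $\OO$ is compact and $\prod V$ is Hausdorff). The key calculation is: writing $|x - y| = |\pi|^{k\ell + s}$ with $0 \leq s < \ell$, iterative application of (ii) is valid for exactly $i = 1, \ldots, k$ and yields $|T^i(x) - T^i(y)| = |\pi|^{(k-i)\ell + s}$ for $0 \leq i \leq k$; hence $\Phi(x)$ and $\Phi(y)$ first differ at index $m = k$, with quotient distance $|\pi|^s$ at that coordinate. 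Matching this against the product metric on $\prod V$ yields $d_V(\Phi(x), \Phi(y)) = |x - y|$. The delicate bookkeeping is tracking the precise stopping criterion for the iteration of (ii) and confirming that the computed residue distances assemble into exactly the prescribed metric on $\prod V$.
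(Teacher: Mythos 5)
Your proposal is correct, and the equivalences (i)$\Leftrightarrow$(ii) and (iii)$\Rightarrow$(ii) run essentially as in the paper (the paper phrases the latter as (iii)$\Rightarrow$(i), closing a cycle; as you note, one must also record that all columns of $A$ sum to $1$, so Lemma~\ref{lem:maMMP}(iv) gives measure preservation, which the definition of isometrically Bernoulli requires). Where you genuinely diverge is in (ii)$\Rightarrow$(iii). The paper first observes that all entries of $A$ equal $\rho(|\pi|^\ell)$, so $A$ is irreducible and $\Sigma' = B_V$; it then invokes Theorem~\ref{thm:maMMPStruct} to get that $\Phi$ is a topological and measurable isomorphism, and upgrades this to an isometry by a counting argument: pre-images of cylinder sets are balls of the same measure (Proposition~\ref{prop:maMPhi}), measure determines radius, and matching the $q^m$ balls of radius $|\pi|^m$ on each side forces $\Phi$ and $\Phi^{-1}$ to preserve radii. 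You instead compute $d_V(\Phi(x),\Phi(y))$ directly by iterating the scaling identity (ii): writing $|x-y|=|\pi|^{k\ell+s}$ with $0\leq s<\ell$, the iteration is legitimate exactly for $i\leq k$, the first coordinate of disagreement is $m=k$, and the residual quotient distance is $|\pi|^s$. This is more elementary and self-contained --- it yields injectivity for free from the isometry property rather than from the non-atomicity argument of Theorem~\ref{thm:maMMPStruct} --- at the cost of the explicit bookkeeping you flag; the paper's route reuses its general machinery and avoids any pointwise computation. One caution: your final assembly gives $d_V(\Phi(x),\Phi(y)) = |\pi|^{\ell k}\cdot|\pi|^s = |x-y|$ only for the \emph{intended} metric $d_V = |\pi|^{\ell m}\,d'_V(a_m,b_m)$; the displayed formula in the paper reads $|\pi|^{-\ell(m-1)}d'_V(a_m,b_m)$, which is inconsistent with the paper's own justification (ii) (the $\pi$-adic expansion map would not be an isometry under it) and appears to be a sign typo, so do not be alarmed that your exponents fail to match it literally.
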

\begin{proof}
{\noindent}{\bf (i)$\Rightarrow$(ii): }\\
Let $H = \OO/B_r(0)$, and $A: H^2 \to \RR_{\geq 0}$ the associated transition matrix.  Note that if $T$ is isometrically Bernoulli, then each entry of $A$ must be equal, and hence must be equal to $\tfrac{1}{\Card{H}}=\rho(r)$.  Now, $T$ must be {\maM} for $r \in \VV$, so $|T(x)-T(y)| = C(x) |x-y|$ for $|x-y| \leq r = |\pi|^\ell$.  But, we must have $\rho(1/C(x))=\rho(r)$, so $C(x)=1/r = |\pi|^{-\ell}$.  

\medskip
{\noindent}{\bf (ii)$\Rightarrow$(iii): }\\
Let $V = \OO/\pp^\ell$.  Now, (ii) implies that $T$ is {\maM} for $r$.  Letting $A$ be the associated transition matrix, we readily note that all non-zero entries of $A$ must be equal to $\rho(|\pi|^\ell)$; as $A$ is a stochastic matrix, this implies that all entries of $A$ are non-zero.

Now, let $\Sigma' = (X_A,\mu_{A,\v},T_{A})$ be as in Theorem~\ref{thm:maMMPStruct}.  We see that $B_V = \Sigma'$.  We observed above that all entries of $A$ are non-zero; then, $A$ is irreducible and Theorem~\ref{thm:maMMPStruct} gives us a topological and measurable isomorphism $\Phi: \OO \to X_A$.  Note that the balls of $X_A$ with respect to $d_V$ are just the cylinder sets.  Moreover, one may check that for each $m \geq 0$, $X_A$ is a disjoint union of $q^m$ balls of radius $r = |\pi|^m$, which must then each have measure $q^{-m} = \rho(r)$.  Then, Proposition~\ref{prop:maMPhi} implies that $\Phi^{-1}$ takes balls of a given radius to balls of the same radius; moreover, $\Phi^{-1}$ must take each of the $q^m$ distinct balls of radius $|\pi|^m$ in $X_A$ to a distinct ball of radius $|\pi|^m$ in $\OO$.  So each ball of radius $|\pi|^m$ in $\OO$ must be the pre-image of precisely one ball of the same radius in $X_A$.  It follows that $\Phi$ and $\Phi^{-1}$ are both isometries.

\medskip
{\noindent}{\bf (iii)$\Rightarrow$(i): }\\
Note that for $x,y \in \OO$ we have
\[ |T(x)-T(y)| = d_V(\Phi(T(x)),\Phi(T(y))) = d_V(T_V(\Phi(x)) , T_V(\Phi(y))). \]  Then, for $d_V(\Phi(x),\Phi(y)) = |x-y| \leq |\pi|^\ell$ we compute
\[ d_V(T_V(\Phi(x)) , T_V(\Phi(y))) = |\pi|^{-\ell} d_V(\Phi(x),\Phi(y)) = |\pi|^{-\ell} |x-y|. \qedhere\]
\end{proof}

\begin{remark}
Note that item (ii) of Lemma~\ref{lem:isoBernCat} implies that $T$ is $|\pi|^{-\ell}$-Lipschitz.
\end{remark}

Now, we may combine Lemma~\ref{lem:binomBounds} with Lemma~\ref{lem:isoBernCat} to get:
\begin{corollary}\label{cor:isoBern}
Let $\{u_k\}$ be a {\TBRBO} sequence with values in $\OO$, with {\cip} $P_k, Q_k$.  Say $T: \OO \to \OO$ is given by the expansion, in the sense of Theorem~\ref{thm:polApprox1},
\[ T(x) = \sum_{k \geq 0} a_k Q_k(x)\text{, with $a_k \in \OO$, $|a_k| \to 0$}.\]  Assume that 
\begin{enumerate}
\item $M = \max_{k \geq 0} \kappa_k |a_k|$ exists, where $\kappa_k$ is as in Theorem~\ref{thm:polApprox2};
\item There is a unique $k_M \geq 0$ attaining this maximum, and moreover it is of the form $k_M=q^\ell$ for some $\ell \geq 1$;
\item $|a_{k_M}| = 1$ (hence, $M = \kappa_{k_M}$).
\end{enumerate}

Then, $T$ is isometrically Bernoulli for $r=1/M \in \VV$.
\end{corollary}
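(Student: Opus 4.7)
The plan is to apply Lemma~\ref{lem:isoBernCat}. Since $k_M = q^\ell$, we have $r = 1/M = 1/\kappa_{k_M} = |\pi|^\ell \in \VV$, so by the equivalence (i)$\Leftrightarrow$(ii) of that lemma it suffices to prove that
\[ |T(x) - T(y)| = M\, |x-y| \qquad \text{for all } x, y \in \OO \text{ with } |x-y| \leq |\pi|^\ell, \]
the case $x = y$ being trivial. Since $|a_k| \to 0$ and $|Q_k(z)| \leq 1$ on $\OO$ by Lemma~\ref{lem:binomBounds}(i), the series $\sum a_k Q_k$ converges uniformly on $\OO$, which allows us to write
\[ T(x) - T(y) = \sum_{k \geq 0} a_k \bigl(Q_k(x) - Q_k(y)\bigr), \]
with individual terms bounded in absolute value by $|a_k|$ and hence tending to zero.

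The argument then turns on showing the $k = k_M$ term strictly dominates all others in the ultrametric sense. Because $k_M = q^\ell$ and $|x-y| \leq 1/\kappa_{k_M}$, Lemma~\ref{lem:binomBounds}(iii) applies directly and gives $|Q_{k_M}(x) - Q_{k_M}(y)| = \kappa_{k_M} |x-y| = M |x-y|$; combined with $|a_{k_M}| = 1$ this shows the $k_M$-th term has absolute value exactly $M|x-y|$. For any $k \neq k_M$, Lemma~\ref{lem:binomBounds}(ii) gives the Lipschitz bound $|a_k(Q_k(x) - Q_k(y))| \leq \kappa_k |a_k| |x-y|$, which by the uniqueness clause of hypothesis (ii) satisfies $\kappa_k |a_k| < M$, so the bound is strictly less than $M|x-y|$.

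To conclude, since each term $|a_k(Q_k(x) - Q_k(y))|$ tends to zero, the supremum over $k \neq k_M$ of these absolute values is actually attained on a finite index set and is therefore strictly bounded above by $M|x-y|$; the strong triangle inequality then yields $|T(x) - T(y)| = M|x-y|$ as required. The one delicate point is precisely this last step: the strict inequalities $\kappa_k |a_k| < M$ for $k \neq k_M$ might a priori accumulate to $M$, so a naive ultrametric dominance argument would threaten to fail, but the decay $|a_k| \to 0$ rescues it by forcing the supremum to be attained and hence strict.
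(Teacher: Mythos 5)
Your proof is correct and follows essentially the same route as the paper's: isolate the $k_M$ term via Lemma~\ref{lem:binomBounds}(iii), bound all other terms strictly below it via Lemma~\ref{lem:binomBounds}(ii) and the uniqueness of the maximum, apply the strong triangle inequality, and finish with Lemma~\ref{lem:isoBernCat}. You simply spell out details the paper leaves implicit, notably that the decay $|a_k|\to 0$ forces the supremum over $k\neq k_M$ to be attained and hence strictly dominated.
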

\begin{proof}
As $k_M$ is the unique value attaining the maximum, the strong triangle inequality and Lemma~\ref{lem:binomBounds} imply that 
\[ |T(x)-T(y)| = \left| \sum_{k \geq 0} a_k \left[Q_k(x)-Q_k(y)\right] \right| = \kappa_{k_M} |a_{k_M}| |x-y| = M |x-y| \]
for $|x-y| \leq 1/\kappa_{k_M} = 1/M$.  Then, our claim follows by Lemma~\ref{lem:isoBernCat}.
\end{proof}

\begin{example}\label{ex:mahlerCondition}
Let $K=\QQ_p$ and $\OO = \ZZ_p$.  Then, $\{ 0, 1, 2, \ldots\}$ is a {\TBRBO} sequence, and letting $P_k, Q_k$ be the {\cip} we can check that $Q_k(x)={x\choose k}$ (cf. Example~\ref{ex:TBRBO}). In this case, $\kappa_k=p^{\lfloor\log_p k\rfloor}$. Therefore, we can rewrite the sufficient conditions in Corollary~\ref{cor:isoBern} as  follows.  Given $T:\ZZ_p\longrightarrow\ZZ_p$ defined by 
$$
T(x)=\sum_{k\geq0}a_k {x\choose k}
$$
assume that 
\begin{enumerate}
\item $M=\max_{k\geq0}|a_k|p^{\lfloor\log_p k\rfloor}$ exists;
\item There is a unique $k_M\geq0$ attaining this maximum, and moreover it is of the form $k_M=p^\ell$ for some $\ell\geq 1$;
\item $|a _{k_M}|=1$ (thus, $M=p^\ell$).
\end{enumerate}
Then $T$ is isometrically Bernoulli for $r=p^{-\ell}$. In particular, the polynomials ${x\choose p^\ell}$ for $\ell>0$ clearly satisfy these conditions, and so each defines a Bernoulli transformation on $\ZZ_p$.

Note, in particular, that this criterion applies to any map $\ZZ_p \to \ZZ_p$ defined by $u {x \choose p} + F(x)$ with $u \in \ZZ_p^\times$ and $F \in \ZZ_p[x]$.  An example of such a map is that given by $\frac{x^p-x}{p}$ from \cite{WoodcockSmart}.
\end{example}

In this context, the polynomials ${x \choose p}$ and $\frac{x^p-x}{p}$ are in a sense the most natural isometrically Bernoulli maps:
\begin{example}\label{ex:x^p-x}
Take a set of coset representatives for $\ZZ_p/p \ZZ_p$.  Then, using Example~\ref{ex:TBRBO} we may form a {\TBRBO} sequence, and then the $p^\text{th}$ corresponding interpolating polynomial (and unit multiples of it) will be Bernoulli by Corollary~\ref{cor:isoBern}.

Let's look at the two most common sets of coset representatives for the quotient $\ZZ_p / p\ZZ_p$:
\begin{enumerate}
\item Take as coset representatives $0,1,2,\ldots,p-1$.  The resulting {\TBRBO} sequence is $\{0,1,\ldots\}$.  Then, $P_p(x) = x(x-1)\cdots(x-p+1)$ and $Q_p(x) = {x \choose p}$ is the $p^\text{th}$ corresponding interpolating polynomial.
\item Take as coset representatives $0$ and the $(p-1)^\text{st}$ roots of unity (there are exactly $p-1$ by Hensel's Lemma); these are called the ``Teichm\"uller representatives.''  Then, $P_p(x) = x^p-x$ and
\[ Q_p(x) = \frac{P_p(x)}{P_p(p)} = \frac{1}{p^{p-1}-1} \frac{x^p-x}{p}. \]
\end{enumerate}

So, the polynomials ${x \choose p}$ and $\frac{x^p-x}{p}$ (up to unit) are analogs, arising by the same construction from the two most common choices for the coset representatives of $\ZZ_p / p\ZZ_p$.
\end{example}

\begin{example}
Let $K = \FF_q((t))$ and $\OO=\FF_q[[t]]$.  We may construct a {\TBRBO} sequence as in Example~\ref{ex:TBRBO}, having  $0,1,2,\ldots,q-1,t$ as its first $q+1$ terms.  Then $$Q_q(x)=\frac{x(x-1)\ldots(x-q+1)}{t(t-1)(t-2)\ldots(t-q+1)},$$ which is isometrically Bernoulli by Corollary~\ref{cor:isoBern}. However, $t-1,$ $t-2,\ldots,t-q+1$ are all units in $\OO$, thus $$t(t-1)\ldots(t-q+1)Q_q(x)=\frac{x(x-1)\ldots(x-q+1)}{t}$$ defines a Bernoulli transformation as well.
\end{example}


Now, we will give two examples of isometrically Bernoulli polynomial maps on the rings of integers of finite extensions of $\QQ_p$.  First, we briefly motivate our choice of examples.  For $K$ a finite extension of $\QQ_p$, let $n=[K:\QQ_p]$,$f=[\OO/\pp:\FF_p]$, and $e = \log_{|\pi|} |p|$.  It is a standard result that $ef = n$.  It is evident that the nature of how $\OO$ compares to $\ZZ_p$ depends on the values of $e$ and $f$.  The two extreme cases are $f=1, e=n$ (in which case we say that the extension is \emph{totally ramified}) and $e=1, f=n$ (in which case we say that the extension is \emph{unramified}).  We give an example from each of these two extremes.  For more background on the relevant theory, including the ``standard'' results invoked in this paragraph and in the following two examples see \cite{Serre}, particularly Ch. I \S7, 8., Ch. III \S 5, Ch. IV \S 4. 

\begin{example}
Take $p>2$ and let
\[ K = \QQ_p(\zeta_p) \text{ where $\zeta_p$ is a primitive $p^\text{th}$ root of unity.} \]
%

It is a standard result that $1-\zeta_p$ may be taken as a uniformizing parameter and that the extension is totally ramified and so the set $\{0,\ldots,p-1\}$ gives a complete set of coset representatives for $\OO/\pp$.  We may construct a {\TBRBO} sequence as in Example~\ref{ex:TBRBO}.  The first $p$ terms would be just $0, \ldots,p-1$, with the next term $1-\zeta_p$.  The first $p^2$ terms would be $\{i+j(1-\zeta_p)\}$ for $0 \leq i,j < p$, with the next term $(1-\zeta_p)^2$.  Noting that $q = p$ and applying Corollary~\ref{cor:isoBern} shows that the transformations defined by the polynomials
\[ \frac{x(x-1)\ldots(x-p+1)}{1-\zeta_p} \] and
\[ \frac{1}{(1-\zeta_p)^3} \prod_{0 \leq i,j < p} \left(x-i - j(1-\zeta_p)\right) \] are isometrically Bernoulli for $r = |\pi|^{1} = |p|^{\tfrac{1}{p-1}}$ and $r=|\pi|^{2} = |p|^{\tfrac{2}{p-1}}$, respectively.
\end{example}

\begin{example}
Take $f > 1$ and let
\[ K = \QQ_p(\zeta) \text{ where $\zeta$ is a primitive $(p^f-1)^\text{th}$ root of unity.} \]
It is a standard result that $K$ is the unique unramified extension of $\QQ_p$ of degree $f$.  So, $p$ may be taken as a uniformizing parameter.  Let $\ol{\zeta} \in \OO/\pp$ be the image of $\zeta$ under the quotient map.  We note that $\ol{\zeta}$ must generate the residue field extension, i.e., $\OO/\pp = \FF_p(\ol{\zeta}) = \FF_p[\ol{\zeta}]$.  So, $S = \{ a_0 + a_1 \zeta + \ldots + a_{f-1} \zeta^{f-1} \}$, with $0 \leq a_0,a_1,\ldots,a_{f-1} < p$, is a complete set of coset representatives for $\OO/\pp$.  Applying the construction of Example~\ref{ex:TBRBO} we may construct a {\TBRBO} sequence whose first $q=p^f$ terms are precisely the elements of $S$, with the following term being $p$.  Then, applying Corollary~\ref{cor:isoBern} shows that the transformation defined by the polynomial
\[ \frac{1}{p} \prod_{0 \leq a_0,a_1,\ldots,a_{f-1} < p} (x-a_0-a_1 \zeta - \cdots - a_{f-1}\zeta^{f-1}) \] is isometrically Bernoulli for $r = |p|$.
\end{example}

\section{Bernoulli maps on $\oh{\ZZ}$}\label{sec:bernZhat}
Define, as usual,
\[ \oh{\ZZ} = \ilim_{n,\divides} \ZZ/n\ZZ \isom \prod_{p} \ZZ_p. \]
We briefly note that the results of this section allow us to produce examples of maps $\NN \to \ZZ$ which extend to Bernoulli maps on $\ZZ_p$ for each $p$, and hence to a Bernoulli map on $\oh{\ZZ}$.  More explicitly, we obtain the following Proposition:
\begin{prop}
Let $a_0, a_1, \ldots$ be a sequence of integers satisfying the following conditions for each rational prime:
\begin{enumerate}
\item $|a_k|_p = 1$ for $k=p$;
\item $|a_k|_p < p^{-\lfloor \log_p k \rfloor}$ for $k > p$.
\end{enumerate}

Define $f: \NN \to \ZZ$ by \[ f(n) = \sum_{k=0}^n a_k {n \choose k}.\] Then, $f$ extends to an isometrically Bernoulli transformation $f: \ZZ_p \to \ZZ_p$ for each prime $p$.
\end{prop}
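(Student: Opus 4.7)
The plan is to fix an arbitrary prime $p$ and verify the three hypotheses of Example~\ref{ex:mahlerCondition} (with $\ell=1$) for the Mahler series $F(x) = \sum_{k\geq 0} a_k {x \choose k}$; the example then yields the conclusion immediately, with $r = p^{-1}$.

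First I would check that $F$ defines a continuous function $\ZZ_p \to \ZZ_p$ extending $f$. Since each $a_k \in \ZZ$ gives $|a_k|_p \leq 1$ and condition (ii) forces $|a_k|_p < p^{-\lfloor\log_p k\rfloor} \to 0$ as $k \to \infty$, the series $F$ converges uniformly on $\ZZ_p$. Lemma~\ref{lem:binomBounds}(i), applied to the {\TBRBO} sequence $\{0,1,2,\ldots\}$ of Example~\ref{ex:TBRBO} (whose {\cip} are $Q_k(x) = {x \choose k}$), gives ${x \choose k} \in \ZZ_p$ for $x \in \ZZ_p$, so $F(\ZZ_p)\subseteq \ZZ_p$. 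Since ${n \choose k}=0$ for $k>n$, the restriction of $F$ to $\NN$ agrees with $f$, so $F$ is the sought extension.

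Next I would compute the quantities $|a_k|_p \cdot p^{\lfloor\log_p k\rfloor}$ appearing in Example~\ref{ex:mahlerCondition}. The $k=0$ term contributes nothing (with the convention $p^{-\infty}=0$). For $1\leq k<p$ one has $\lfloor\log_p k\rfloor=0$, so the product equals $|a_k|_p \leq 1$. For $k=p$, condition (i) gives $|a_p|_p\cdot p = p$. For $k>p$, condition (ii) yields $|a_k|_p \cdot p^{\lfloor\log_p k\rfloor} < 1$. Hence $M=p$, this maximum is attained uniquely at $k_M = p = p^1$, and $|a_{k_M}|_p = 1$, so all three hypotheses of Example~\ref{ex:mahlerCondition} are satisfied and $F\colon\ZZ_p\to\ZZ_p$ is isometrically Bernoulli for $r=p^{-1}$.

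There is no substantive obstacle: the proposition is essentially a repackaging of Corollary~\ref{cor:isoBern}/Example~\ref{ex:mahlerCondition}, and the real content is in choosing the growth conditions (i) and (ii) so that the required maximum analysis works at every prime simultaneously. The only thing to watch is that the bound in (ii) must beat not only the trivial bound for $k < p$ but also the value $p$ contributed by $k = p$; this is automatic because (ii) forces the product to be strictly less than $1$, guaranteeing the uniqueness of $k_M = p$.
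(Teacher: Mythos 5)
Your proof is correct and follows the same route as the paper: compute $p^{\lfloor\log_p k\rfloor}|a_k|_p$ for each $k$, observe that it is at most $1$ for $k<p$, equals $p$ for $k=p$, and is less than $1$ for $k>p$, so the maximum is attained uniquely at $k=p$ with $|a_p|_p=1$, and then invoke Corollary~\ref{cor:isoBern} (via Example~\ref{ex:mahlerCondition}). Your additional verification that the Mahler series converges and genuinely extends $f$ from $\NN$ to $\ZZ_p$ is a welcome detail the paper leaves implicit.
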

\begin{proof}
For each prime $p$, note that the quantity $p^{\lfloor \log_p k \rfloor} |a_k|_p$ attains its maximum for $k=p$ (and for no other $k$) and that $|a_k|_p = 1$.  Then, the result is immediate by Corollary~\ref{cor:isoBern}.
\end{proof}

\begin{example}
Let $f: \NN \to \ZZ$ be defined by
\[ f(n) = \sum_{p \leq n} \prod_{p' < p} {p'}^{1+ \lfloor \log_{p'} p \rfloor} {n \choose p} \]
where the summation is over primes bounded by $n$, and the product over primes bounded by $p$.  In the notation of the Proposition, we have
\[ a_p = \prod_{p' < p} {p'}^{1+\lfloor \log_{p'} p \rfloor}, \] and $a_k =0 $ for $k$ not a prime.  So, for each prime $p$ it is the case that $|a_p|_p = 1$.  Moreover, for $k >p$ we see that $|a_k|_p \leq p^{-1-\lfloor\log_{p} k \rfloor} < p^{-\lfloor \log_p k \rfloor}$.  So, the conditions of the Proposition are satisfied, and $f$ extends to an isometrically Bernoulli transformation $\ZZ_p \to \ZZ_p$ for each prime $p$.
\end{example}

\begin{example}
Let $f: \NN \to \ZZ$ be defined by
\[ f(n) = \sum_{k \leq n} {(k-1)!}^k {n \choose k}. \]
That is, we set $a_k = {(k-1)!}^k$.  Then, $|a_p|_p = |{(p-1)!}^p|_p = 1$.  And, for $k > p$ we see that certainly $|a_k|_p = |{(k-1)!}^{k-1}|_p < p^{-\lfloor \log_p k \rfloor}$.  So, the conditions of the Proposition are again satisfied.
\end{example}

\section{Polynomial almost Bernoulli maps on $\ZZ_p$}\label{sec:polAlmostBern}
An important condition shared by isometrically Bernoulli polynomials maps is that their derivatives must have constant valuation.  We may use this observation to come up with a class of interesting non-examples.  Our non-examples will be polynomial maps whose derivatives have constant valuation, but that are not measure-preserving.

\begin{prop}\label{prop:binomConstDeriv}
Given $n \in \NN$, the map $f: \ZZ_p \to \ZZ_p$ defined by \[ f(x) = {x \choose n} \] satisfies $|f'(x)| = C$ for some $C \in \VV$ and for all $x \in \ZZ_p$ if and only if $n = a p^\ell$, with $1 \leq a < p$ and $\ell \in \ZZ_{\geq 0}$, and \[ \frac{1}{u} + \ldots + \frac{1}{u+a-1} \not\equiv 0 \pmod p \] for each $u \in \{1,2,\ldots,p-a\}$, where inverses mod.~$p$ are taken in $\FF_p^*$.
\end{prop}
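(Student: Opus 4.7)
The plan is to read off the Mahler expansion of $f'(x)$, identify its minimum $p$-adic valuation, and then reduce modulo $p$ to extract the condition for $|f'|$ to be constant.

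First, I would express $f'$ in the Mahler basis. Using the addition formula $\binom{x+p^k}{n} = \sum_{j=0}^{n}\binom{x}{n-j}\binom{p^k}{j}$, together with $\binom{p^k}{j}/p^k \to (-1)^{j-1}/j$ in $\QQ_p$ as $k\to\infty$ (which follows from $\binom{p^k-1}{j-1} = \prod_{i=1}^{j-1}(p^k-i)/i \to (-1)^{j-1}$), one obtains
\[
f'(x) \;=\; \sum_{j=1}^{n}\frac{(-1)^{j-1}}{j}\binom{x}{n-j} \;=\; \sum_{m=0}^{n-1} b_m\binom{x}{m}, \qquad b_m = \frac{(-1)^{n-m-1}}{n-m}.
\]
By Theorem~\ref{thm:polApprox1}(iv), $\sup_{x\in\ZZ_p}|f'(x)| = \sup_m |b_m| = p^L$ where $L = \lfloor \log_p n\rfloor$. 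By continuity and compactness, $|f'|$ is constant on $\ZZ_p$ iff this supremum is attained everywhere, iff $p^L f'(x) \in \ZZ_p^\times$ for all $x$.

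Next, reduce $p^L f'(x)\pmod p$. Write $n = ap^L + r$ with $a = n_L \in \{1,\dots,p-1\}$ and $0 \leq r < p^L$. The only Mahler coefficients surviving mod $p$ are those with $v_p(n-m) = L$, namely $m = (a-i)p^L + r$ for $i=1,\dots,a$, reducing to $(-1)^{i-1}/i$ (for $p$ odd, $p^L$ is odd so $(-1)^{ip^L-1} = (-1)^{i-1}$; for $p=2$, $a=1$ forces $i=1$ and the sign is immaterial since the single term is nonzero). Applying Lucas' theorem, writing $x = \sum_j x_j p^j$, yields
\[
p^L f'(x) \;\equiv\; \Bigl(\prod_{j<L}\binom{x_j}{r_j}\Bigr)\cdot Q(x_L) \pmod p, \qquad Q(y) \;=\; \sum_{i=1}^{a}\frac{(-1)^{i-1}}{i}\binom{y}{a-i}.
\]
Taking $x$ with $x_j=0$ for $j<L$ forces $\binom{0}{r_j}\neq 0$ for each $j<L$, i.e.\ $r_j = 0$, so $n = ap^L$. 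Given this, the remaining condition is $Q(y)\not\equiv 0 \pmod p$ for every $y\in\{0,\dots,p-1\}$.

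The key identity to finish is the polynomial equality $Q(y) = \frac{d}{dy}\binom{y}{a}$, which I would verify by checking that both sides are polynomials of degree $a-1$ agreeing at the $a$ points $y = 0, 1,\dots,a-1$: in the derivative $\frac{1}{a!}\sum_{i=0}^{a-1}\prod_{k\neq i}(y-k)$, only the $i=y=j$ summand survives at $y=j$ and gives $(-1)^{a-j-1}j!(a-j-1)!/a!$, which equals $Q(j)$ by direct substitution (or by the Beta-integral representation $Q(j) = (-1)^{a-1}\int_0^1 x^{a-j-1}(x-1)^j\,dx$). This splits the verification of $Q(y)\not\equiv 0$ into two cases. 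For $y \in \{0,\dots,a-1\}$, the above formula gives $Q(y) = (-1)^{a-y-1} y!(a-y-1)!/a!$, which is a $p$-adic unit since $a < p$. For $y \in \{a,\dots,p-1\}$, using $\frac{d}{dy}\binom{y}{a} = \binom{y}{a}(H_y - H_{y-a})$ (valid for integer $y\geq a$, hence polynomially everywhere the two agree), one has $Q(y) = \binom{y}{a}\sum_{m=y-a+1}^{y}\tfrac{1}{m}$, and $\binom{y}{a}$ is a $p$-adic unit for $a\leq y\leq p-1$, so $Q(y)\equiv 0\pmod p$ iff the harmonic tail vanishes. Reindexing via $u = y-a+1$, which bijects $\{a,\dots,p-1\}$ with $\{1,\dots,p-a\}$, yields the stated condition. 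The main technical obstacle I expect is the careful Lucas-type bookkeeping and sign analysis in the reduction step, together with cleanly establishing the polynomial identity $Q(y) = \frac{d}{dy}\binom{y}{a}$.
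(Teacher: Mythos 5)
Your argument is correct, and it reaches the stated criterion by a genuinely different route from the paper's. The paper works pointwise: it uses density of $\ZZ$ in $\ZZ_p$, writes $f'$ via the product rule as $\frac{1}{n!}\sum_i \prod_{j\neq i}(x-j)$, and does the valuation bookkeeping directly on the multiset $\{x,x-1,\ldots,x-n+1\}$ (counting how many of these $n$ consecutive integers each power of $p$ divides); the necessity of $r=0$ is extracted separately from the vanishing of $\binom{n-1}{p^\ell-1}$ mod $p$ via $(1+t)^{n-1}\equiv(1+t^{p^\ell})^a(1+t)^{r-1}$. You instead work with coefficients: the closed-form Mahler expansion $f'(x)=\sum_{j=1}^{n}\frac{(-1)^{j-1}}{j}\binom{x}{n-j}$ plus Theorem~\ref{thm:polApprox1}(iv) immediately pins down $\sup|f'|=p^{\lfloor\log_p n\rfloor}$, and Lucas' theorem factors $p^Lf'(x)\bmod p$ digit-by-digit, so that both the condition $r=0$ and the harmonic condition fall out of the non-vanishing over $\FF_p$ of the single polynomial $Q(y)=\frac{d}{dy}\binom{y}{a}$ evaluated at the digit $x_L$. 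This buys a cleaner separation of the two halves of the criterion, a transparent role for each base-$p$ digit of $n$ and of $x$, and a pleasing self-similarity (the criterion for $\binom{x}{ap^\ell}$ reduces to the unit-valuedness of the derivative of $\binom{y}{a}$ on $\{0,\ldots,p-1\}$); the paper's argument is shorter and needs none of the Section~\ref{sec:polApprox} machinery. Two small remarks: the identity $Q=\frac{d}{dy}\binom{y}{a}$ is literally the $n=a$ case of the Mahler expansion you already derived, so the interpolation/Beta-integral verification is unnecessary; and your invocation of Lucas for $p$-adic (rather than integer) $x$ should be justified in one line by noting that $\binom{x}{m}\bmod p$ is locally constant (e.g.\ by the Lipschitz bound of Lemma~\ref{lem:binomBounds}(ii)) and $\NN$ is dense in $\ZZ_p$.
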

\begin{proof}
Write $n = a p^\ell + r$, $a < p$, $r < p^\ell$.  We first wish to show that $r=0$.  We may compute \[ \left| f'(a) \right| = \left|\binom{n-1}{a}\right| / \left|n\right| \qquad \text{for $0 \leq a \leq n-1$}.\]  If $r \geq 1$, then $(1+t)^{r-1} \equiv (1+t^{p^\ell})^{a} (1+t)^{r-1} \pmod p$ has no $t^{p^\ell-1}$ term, so we see that $|f'(p-1)| < |f'(0)|$.  Thus, $r=0$.  Note that $|f'(x)|$ is constant on $\ZZ_p$ iff it is constant on $\ZZ$, since $\ZZ \subset \ZZ_p$ is dense.  For $x \in \ZZ$, we observe that the set $\{ x, x-1, \ldots, x-n+1\}$ contains precisely $n/p^k$ terms divisible by $p^k$ for $k \leq \ell$, and either zero or one term divisible by some higher power of $p$.  By the strong triangle inequality, we note that $|f'(x)| \leq p^\ell$, with equality if there is a term divisible by $p^{\ell+1}$.  If no term is divisible by $p^{\ell+1}$, then let $u p^\ell, (u+1) p^\ell, \ldots, (u+a-1) p^\ell$ be the terms divisible by $p^\ell$ and observe that $u, u+1, \ldots, u+a-1$ are coprime to $p$.  An easy computation shows that $|f'(x)| = p^\ell$ if and only if 
\[ \sum_{i=0}^{a-1} (u+i)^{-1} \not\equiv 0 \pmod p.  \qedhere\]
\end{proof}

\begin{remark} Suppose the hypotheses of the Proposition hold.  Then, we may compute the value of $f(x) \pmod p$ by performing a careful but easy computation involving cancelling corresponding powers in $x(x-1)\cdots(x-n+1)$ (henceforth, ``the numerator'') and $n!$.  The only terms which are not obviously matched are those correspoding to terms divisible by $p^{\ell}$.  Suppose $u p^\ell, \ldots, (u+a-1) p^\ell$ are the terms in the numerator divisible by $p^\ell$, where now we need not assume that $u, \ldots, u+a-1$ are coprime to $p$.  Then,
\[ f(x) \equiv \frac{\prod_{i=0}^{a-1} (u+i)}{a!} \pmod p . \]
\end{remark}

The simplest family of maps satisfying the hypotheses of Prop.~\ref{prop:binomConstDeriv} is that in the following Corollary:
\begin{corollary}
Let $p \geq 3$, then $n = (p-2) p^\ell$ satisfies the conditions of Prop.~\ref{prop:binomConstDeriv} and so $|f'(x)| = p^\ell$ for all $x \in \ZZ_p$.
\end{corollary}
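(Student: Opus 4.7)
The plan is to verify the hypothesis of Proposition~\ref{prop:binomConstDeriv} directly for the specific choice $a = p-2$, and then simply invoke the Proposition. Writing $n = (p-2) p^{\ell}$ we automatically have $1 \leq a < p$ (using $p \geq 3$) and $\ell \in \ZZ_{\geq 0}$, so the only thing to check is the non-vanishing of
\[ S(u) \eqdef \frac{1}{u} + \frac{1}{u+1} + \cdots + \frac{1}{u+a-1} \pmod p \]
for every $u \in \{1, 2, \ldots, p-a\}$. Since $a = p - 2$, the set of relevant $u$ is just $\{1, 2\}$, so only two sums need to be examined.

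The key observation is that the map $k \mapsto k^{-1}$ permutes $\FF_p^{\times} = \{1, 2, \ldots, p-1\}$, so
\[ T \eqdef \sum_{k=1}^{p-1} \frac{1}{k} \equiv \sum_{k=1}^{p-1} k = \frac{(p-1)p}{2} \equiv 0 \pmod p, \]
where the last congruence uses that $p$ is odd (guaranteed by $p \geq 3$). Both sums $S(1)$ and $S(2)$ are obtained from $T$ by deleting a single term: $S(1) = T - \frac{1}{p-1}$ and $S(2) = T - \frac{1}{1}$. Hence $S(1) \equiv -\tfrac{1}{p-1} \equiv 1 \pmod p$ and $S(2) \equiv -1 \pmod p$, both nonzero mod $p$.

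This verifies the numerical condition of Proposition~\ref{prop:binomConstDeriv}, and the final conclusion $|f'(x)| = p^{\ell}$ for all $x \in \ZZ_p$ follows immediately. There is essentially no obstacle here: the only small subtlety is remembering that the hypothesis $p \geq 3$ ensures both that $a = p-2 \geq 1$ and that $\pm 1 \not\equiv 0 \pmod p$, so no degenerate case arises.
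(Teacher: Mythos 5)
Your proof is correct and takes the same route as the paper, which simply asserts that the last condition of Prop.~8.1 can be verified ``by an easy computation''; you have carried out that computation explicitly and correctly, observing that for $a=p-2$ only $u\in\{1,2\}$ occur and that each sum is $\sum_{k=1}^{p-1}k^{-1}\equiv 0\pmod p$ minus a single nonzero term.
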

\begin{proof}
We may verify the last condition of Prop.~\ref{prop:binomConstDeriv} by an easy computation.
\end{proof}

\begin{example} Suppose $p \geq 3$, $\ell \geq 1$, and $n = (p-2) p^\ell$. Define a transformation $f: \ZZ_p \to \ZZ_p$ by \[ f(x) = {x \choose n}.\]   By the Corollary, we have that $|f'(x)| = p^\ell$ for all $x \in \ZZ_p$.  Mimicking the proof of Lemma~\ref{lem:binomBounds} we can show that $f$ is {\maM} for $r = p^{-(\ell+1)}$.   Moreover, since $(p-1)! \equiv -1 \pmod p$, the computation of the previous Remark yields
\[ f(x) \pmod p \equiv \begin{cases} 0 & \text{if $x \equiv 0, 1, \ldots, n-1 \pmod{p^{\ell+1}}$} \\ 1 & \text{if $x \equiv n, n+1, \ldots, n+p^\ell-1 \pmod{p^{\ell+1}}$} \\ -1 & \text{if $x \equiv n + p^\ell, \ldots, p^{\ell+1}-1 \pmod{p^{\ell+1}}$} \end{cases} \]  Identifying  $i \in \{0,1,\ldots,p^{\ell+1}-1\}$ with $i + p^{\ell+1} \ZZ_p$, the associated transition matrix for $f$ is thus
\[ A = (A(i,j))_{0 \leq i,j<p^{\ell+1}} = \begin{cases}\frac{p-2}{p} & \text{if $i \equiv 0, 1, \ldots, n-1 \pmod{p^{\ell+1}}$ and $j \equiv 0 \pmod p$} \\ \frac{1}{p} & \text{if $i \equiv  n, n+1, \ldots, n+p^\ell-1 \pmod{p^{\ell+1}}$ and $j \equiv 1 \pmod p$} \\ \frac{1}{p} & \text{if $i \equiv n + p^\ell, \ldots, p^{\ell+1}-1 \pmod{p^{\ell+1}}$ and $j \equiv -1 \pmod p$} \end{cases} \] 

So, if $p \neq 3$, we see that $f$ is not measure-preserving.  However, $A$ does have a left-eigenvector of eigenvalue $1$:
\[ \v =  (\v(i))_{0 \leq i < p^{\ell+1}} = \begin{cases} \frac{p-2}{p} & \text{if $i \equiv 0 \pmod p$} \\ \frac{1}{p} & \text{if $i \equiv \pm 1 \pmod p$} \end{cases} \]  This corresponds to an $f$-invariant measure $\sq{\mu}$ on $\ZZ_p$ defined on any $\mu$-measurable set $S \subset \ZZ_p$ by
\[ \sq{\mu}(S) = \frac{p-2}{p}\mu\left(S \cap B_{r}(0)\right) + \frac{1}{p} \mu\left(S \cap B_r(1)\right) + \frac{1}{p} \mu\left(S \cap B_r(-1)\right). \]  Then, the map $\Phi$ defined in Proposition~\ref{prop:maMPhi} gives a measurable isomorphism of $(\ZZ_p,\sq{\mu},f)$ with $(X_{A},\mu_{A,\v},T_{A})$, where the latter dynamical system is mixing Markov.
\end{example}

\bibliographystyle{amsalpha}
\bibliography{note}
\end{document}